\newtheorem{thm}{Theorem}[section]
\newtheorem*{thm*}{Theorem}
\newtheorem{cor}[thm]{Corollary}
\newtheorem*{cor*}{Corollary}
\newtheorem{lem}[thm]{Lemma}
\newtheorem*{lem*}{Lemma}
\newtheorem{prop}[thm]{Proposition}
\newtheorem*{prop*}{Proposition}
\theoremstyle{definition}
\newtheorem{defn}{Definition}[section]
\newtheorem*{defn*}{Definition}
\theoremstyle{remark}
\newtheorem*{rem*}{Remark}
\newtheorem{example}{Example}[section]
\newtheorem*{problem*}{Problem}
\newcommand{\QQ}{\mathbb Q}
\newcommand{\CC}{\mathbb C}
\newcommand{\FF}{\mathbb F}
\newcommand{\HH}{\mathbb H}
\newcommand{\ZZ}{\mathbb Z}
\newcommand{\cM}{\mathcal M}
\newcommand{\cP}{\mathcal P}
\newcommand{\Gr}{\mathrm{Gr}}
\DeclareMathOperator{\res}{res}
\DeclareMathOperator{\pExp}{Exp}
\DeclareMathOperator{\pLog}{Log}
\title[Poincar\'e polynomials of moduli spaces]{Poincar\'e polynomials of moduli spaces of Higgs bundles and character varieties (no punctures)}
\author{Anton Mellit}
\email{mellit@gmail.com}
\address{Institute of Science and Technology, \\
Am Campus 1, 3400 Klosterneuburg, Austria}
\begin{document}
\onehalfspacing

\begin{abstract}
Using our earlier results on polynomiality properties of plethystic logarithms of generating series of certain type we show that Schiffmann's formulas for various counts of Higgs bundles over finite fields can be reduced to much simpler formulas conjectured by Mozgovoy. In particular, our result implies the conjecture of Hausel and Rodriguez-Villegas on the Poincar\'e polynomials of twisted character varieties and the conjecture of Hausel and Thaddeus on independence of $E$-polynomials on the degree.
\end{abstract}

\maketitle

\section{Introduction} 
In \cite{schiffmann2014indecomposable} Schiffmann computed the number of absolutely indecomposable vector bundles of rank $r$ and degree $d$ over a compete curve $C$ of genus $g$ over $\FF_q$. Suppose the eigenvalues of the Frobenius acting on the first cohomology of $C$ are 
$\alpha_1,\ldots,\alpha_{2g}$ with $\alpha_{i+g}=q\alpha_i^{-1}$ for $i=1,\ldots,g$. This means that for all $k\geq 1$ we have
\[
\# C(\FF_{q^k}) = 1+q^k-\sum_{i=1}^{2g} \alpha_i^k.
\]

Schiffmann's result says that the number of absolutely indecomposable vector bundles of rank $r$ and degree $d$ on $C$ is given by a Laurent polynomial independent of $C$
\[
A_{g,r,d}(q,\alpha_1,\ldots,\alpha_g) \in \ZZ[q,\alpha_1^{\pm 1}, \ldots,\alpha_g^{\pm 1}],
\]
symmetric in $\alpha_i$ and invariant under $\alpha_i\to q\alpha_i^{-1}$.

Suppose $(r,d)=1$. Schiffmann showed that the number of stable Higgs bundles of rank $r$ and degree $d$ is given by $q^{1+(g-1)r^2} A_{g,r,d}$. 
Let $C$ be a curve over $\CC$. The moduli space of stable Higgs bundles $\cM_{g,r,d}(C)$ is a quasi-projective variety and by a theorem of Katz (\cite{hausel2008mixed}) its $E$-polynomial defined as
\[
E_{g,r,d}(x,y) = \sum_{i,j,k} (-1)^k x^i y^j \dim \Gr_F^i \Gr^W_{i+j} H^k_c(\cM_{g,r,d}(C), \CC)
\]
is given by $(xy)^{1+(g-1)r^2} A_{g,r,d}(xy,x,\ldots,x)$. It is known (\cite{hausel2005mirror}) that this moduli space has pure cohomology.
In particular, the Poincar\'e polynomial
\[
P_{q,r,d}(q) = \sum_i (-1)^i q^{\frac{i}2} \dim H^i_c(\cM_{g,r,d}(C))
\]
is the following specialization:
\[
P_{q,r,d}(q) = E_{g,r,d}(q^{\frac12},q^{\frac12}) = q^{1+(g-1)r^2} A_{g,r,d}(q,q^{\frac12},\ldots,q^{\frac12}).
\]
Since twisted character varieties are diffeomorphic to the moduli spaces of stable Higgs bundles (see \cite{hausel2008mixed}), their Poincar\'e polynomials coincide.

The formula of Schiffmann was difficult to work with. In particular, it was not clear that his formula is equivalent to a much simpler formula conjectured earlier by Hausel and Rodriguez-Villegas for Poincar\'e polynomials (\cite{hausel2008mixed}), and then extended by Mozgovoy for the polynomials $A_{g,r,d}$ (\cite{mozgovoy2012solutions}).

Here we study Schiffmann's formula from the combinatorial point of view and establish these conjectures. Our main result is:
\begin{thm}\label{thm: main intro}
Let $g\geq 1$. Let $\Omega_g$ denote the series
\[
\Omega_g = \sum_{\mu\in\cP} T^{|\mu|} \prod_{\square\in\mu} \frac{\prod_{i=1}^g (z^{a(\square)+1} - \alpha_i q^{l(\square)}) (z^{a(\square)} - \alpha_i^{-1} q^{l(\square)+1})}{(z^{a(\square)+1} - q^{l(\square)}) (z^{a(\square)} - q^{l(\square)+1})},
\]
and let
\[
H_g = -(1-q)(1-z) \pLog \Omega_g,\qquad H_g = \sum_{n=1}^\infty {H_{g,r} T^r}.
\]
Then for all $r\geq 1$ $H_{g,r}$ is a Laurent polynomial in $q$, $z$ and $\alpha_1,\ldots,\alpha_g$, and for all $d$ $A_{g,r,d}$ is obtained by setting $z=1$ in $H_{g,r}$:
\[
A_{g,r,d}(q,\alpha_1,\ldots,\alpha_g) = H_{g,r}(q,1,\alpha_1,\ldots,\alpha_g).
\]
\end{thm}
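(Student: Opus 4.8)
The plan is to prove the two assertions separately: first the polynomiality of $H_{g,r}$, which I would obtain by recognizing $\Omega_g$ as an instance of the class of series to which the earlier plethystic-logarithm machinery applies; and then the identification $A_{g,r,d}=H_{g,r}|_{z=1}$, which I would obtain by matching against Schiffmann's count. The coefficient of $T^{|\mu|}$ in $\Omega_g$ is a product over the boxes $\square\in\mu$ of rational functions in $q,z,\alpha_1,\ldots,\alpha_g$ whose denominators $(z^{a(\square)+1}-q^{l(\square)})(z^{a(\square)}-q^{l(\square)+1})$ are built entirely from the hook data. This is exactly the shape covered by the earlier polynomiality theorem: after applying $\pLog$ and clearing the two resonance factors by multiplying by $-(1-q)(1-z)$, the a priori poles of $\pLog\Omega_g$ along the loci $z^{a+1}=q^l$ and $z^a=q^{l+1}$ are cancelled, and each $T^r$-coefficient becomes a Laurent polynomial. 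I would spend a short lemma verifying that the denominators appearing here are precisely the factors controlled by that theorem, which yields $H_{g,r}\in\ZZ[q^{\pm1},z^{\pm1},\alpha_1^{\pm1},\ldots,\alpha_g^{\pm1}]$.

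For the identification at $z=1$, the conceptual guide is that $\Omega_g|_{z=1}$ is the generating series of an \emph{all-objects} count (a motivic volume of the relevant stack), while the plethystic logarithm extracts its indecomposable, or connected, part --- this is the finite-field avatar of the Kac-type relation $\pExp(\text{indecomposables})=(\text{all objects})$. I would make this precise by recalling Schiffmann's expression for $A_{g,r,d}$ as a nested sum (over compositions of $r$, with weights of iterated-residue type coming from Eisenstein-like kernels in the Hall algebra of $C$) and reorganizing it into generating-series form. The crucial step is to show that this nested sum is exactly the $T^r$-coefficient of $\pLog$ of $\Omega_g|_{z=1}$, i.e.\ that the product-over-boxes structure of $\Omega_g|_{z=1}$ reproduces, term by term, the combinatorial weights in Schiffmann's kernel. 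Since the prefactor $(1-z)$ vanishes at $z=1$ while $H_{g,r}$ is already known to be a Laurent polynomial, the specialization $H_{g,r}(q,1,\alpha_1,\ldots,\alpha_g)$ is unambiguous, and matching it against Schiffmann's formula gives $A_{g,r,d}=H_{g,r}|_{z=1}$.

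The main obstacle is this term-by-term collapse. Schiffmann's formula is genuinely intricate, and showing that its nested-residue structure factorizes into the single partition-sum $\Omega_g$ under the plethystic logarithm requires careful bookkeeping of how the arm and leg statistics $a(\square),l(\square)$ organize the contributions; I expect this to hinge on an auxiliary \emph{factorization} identity expressing Schiffmann's kernel as a product of per-box factors, after which the passage to $\pLog$ is formal. A further subtlety is that Schiffmann's expression is a priori $d$-dependent, whereas $H_{g,r}$ carries no $d$: the combinatorial collapse must therefore be robust enough to erase the dependence on $d$, and the resulting statement $A_{g,r,d}=H_{g,r}|_{z=1}$ for \emph{every} $d$ is precisely what recovers the Hausel--Thaddeus independence. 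Finally, I would emphasize that the $z$-deformation is not cosmetic: it is exactly what renders the earlier polynomiality theorem applicable and thereby legitimizes the $z=1$ specialization, so the two movements of the proof are genuinely interdependent.
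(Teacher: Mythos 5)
Your first movement (polynomiality of $H_{g,r}$ by recognizing $\Omega_g$ as an instance of the earlier integrality machinery) agrees with the paper: up to the change of variables $(q,z)\to(t,q)$, the series $\Omega_g$ is exactly $\sum_{\mu}\prod_{i=1}^g N_\mu(\alpha_i^{-1})/N_\mu(1)\,T^{|\mu|}$, to which the main theorem of \cite{mellit2016integrality} applies. The genuine gap is in your second movement. You posit an ``auxiliary factorization identity expressing Schiffmann's kernel as a product of per-box factors, after which the passage to $\pLog$ is formal.'' No such identity holds, and this failure is precisely what the paper is organized around. What is true (Sections 2--3, via Lemma \ref{lem:munu} and Corollary \ref{cor:product arms legs}) is that Schiffmann's term factors as $\Omega_\mu=f_\mu\,\prod_{i=1}^g N_\mu(\alpha_i^{-1})/N_\mu(1)$, i.e.\ Mozgovoy's per-box product times a genuinely nontrivial correction $f_\mu$, where $f$ is the symmetrization over $S_{l(\mu)}$ in \eqref{eq:f definition}; already $f(z_1)=P(1)/P(z_1)\neq 1$ (Example \ref{ex:f small n}). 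Consequently the two partition sums $\Omega'$ and $\Omega_g$ are \emph{not} equal, there is no term-by-term collapse, and their plethystic logarithms differ away from $z=1$: the theorem asserts equality only after specialization, i.e.\ the difference of the two $\pLog$'s lies in the ideal generated by $(1-z)$ (by $(q-1)$ in the Section 5 variables). Proving that $f_\mu$ becomes invisible at $z=1$ \emph{after} $\pLog$ is the technical heart of the paper, Lemma \ref{lem:deformation}; it requires the interpolation property $f(1,z_1,\dots,z_n)=f(qz_1,\dots,qz_n)$ of Proposition \ref{prop:regularity}, the denominator bounds of Section 4 (Corollary \ref{cor:laurent}, needed even to know that the coefficients of the Schiffmann-side $\pLog$ are Laurent polynomials in the $\alpha_i$, so the congruence can be transferred back from Laurent series), a deformation parameter $u$, and the full admissibility apparatus of \cite{mellit2016integrality}: the operation $\int^S$, the operators $\nabla,\tau,\tau^*$, Tesler's identity, and the auxiliary variable $W$ with $\HH_{f,i}[X,S]\equiv\HH_{f,i}[X,0]\pmod{(q-1)}$. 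None of this is ``formal,'' and your proposal contains no substitute for it.

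A secondary but concrete defect: your conceptual anchor ``$\Omega_g|_{z=1}$ is an all-objects count whose $\pLog$ extracts indecomposables'' cannot be set up as stated, because $\Omega_g$ has poles along $z=1$ --- every cell with $l(\square)=0$ (present in every nonempty $\mu$) contributes the vanishing factor $z^{a(\square)+1}-q^{l(\square)}$ in the denominator. You partly acknowledge this when invoking the prefactor $(1-z)$, but your matching plan still asks to identify Schiffmann's nested sum with ``the $T^r$-coefficient of $\pLog\Omega_g|_{z=1}$,'' which is ill-posed; only $-(1-q)(1-z)\pLog\Omega_g$ specializes, and only once polynomiality is established. Note also that the paper does not re-derive Schiffmann's count from Hall-algebra considerations: it takes Schiffmann's formula as the definition of a second series $\Omega'$ and proves the congruence of the two $\pLog$'s, the $d$-independence then being automatic since $H_{g,r}$ never involved $d$. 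In short, you correctly guessed the two endpoints and even half of the structural factorization, but the one idea that bridges them --- the deformation lemma showing that multiplying the terms of an admissible series by a regular correction $f_\mu(u)$ leaves $(q-1)\pLog$ unchanged modulo $(q-1)$ --- is missing, and the ``formal'' shortcut you propose in its place is false.
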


As a corollary we obtain the $GL$-version of the conjecture of Hausel and Thaddeus (see Conjecture 3.2 in \cite{hausel2005mirror}):
\begin{cor}
For $r,d,d'$ satisfying $(r,d)=(r,d')=1$ the $E$-polynomials of $\cM(g,r,d)$ and $\cM(g,r,d')$ coincide.
\end{cor}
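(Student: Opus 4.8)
The plan is to deduce the corollary directly from Theorem~\ref{thm: main intro} together with the expression for the $E$-polynomial recalled in the introduction; the essential observation is that the right-hand side of the identity furnished by the theorem does not involve the degree at all.

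First I would recall that for a pair $(r,d)$ with $(r,d)=1$ the moduli space $\cM_{g,r,d}(C)$ is smooth, so that by the theorem of Katz its $E$-polynomial is
\[
E_{g,r,d}(x,y) = (xy)^{1+(g-1)r^2}\, A_{g,r,d}(xy,x,\ldots,x).
\]
Thus the coincidence of $E$-polynomials for two coprime degrees $d$ and $d'$ is equivalent to the equality $A_{g,r,d}=A_{g,r,d'}$ of the corresponding Laurent polynomials.

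Next I would invoke Theorem~\ref{thm: main intro}, which asserts that for every $d$ one has
\[
A_{g,r,d}(q,\alpha_1,\ldots,\alpha_g) = H_{g,r}(q,1,\alpha_1,\ldots,\alpha_g).
\]
The series $\Omega_g$, the logarithm $H_g=-(1-q)(1-z)\pLog\Omega_g$, and hence each coefficient $H_{g,r}$, are defined purely in terms of $g$, the variables $q,z,\alpha_1,\ldots,\alpha_g$, and the formal parameter $T$; the degree $d$ enters nowhere in their construction. Consequently the specialization $H_{g,r}(q,1,\alpha_1,\ldots,\alpha_g)$ is one and the same Laurent polynomial for all $d$, whence $A_{g,r,d}=A_{g,r,d'}$ whenever $(r,d)=(r,d')=1$. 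Substituting into the displayed formula for the $E$-polynomial gives $E_{g,r,d}=E_{g,r,d'}$, as claimed.

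Since the genuine content is packed entirely into Theorem~\ref{thm: main intro}, there is no real obstacle in the deduction itself; the only point requiring care is the coprimality hypothesis $(r,d)=1$, which is exactly what guarantees smoothness of $\cM_{g,r,d}(C)$ and therefore the validity of the formula relating $E_{g,r,d}$ to $A_{g,r,d}$. Without it the identification of the $E$-polynomial with a specialization of $A_{g,r,d}$ would fail, and the argument would not apply.
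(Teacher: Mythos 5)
Your proposal is correct and coincides with the paper's own (implicit) deduction: the paper states the corollary as an immediate consequence of Theorem~\ref{thm: main intro}, precisely because $H_{g,r}(q,1,\alpha_1,\ldots,\alpha_g)$ is manifestly independent of $d$, so $A_{g,r,d}=A_{g,r,d'}$ and the Katz-type formula $E_{g,r,d}(x,y)=(xy)^{1+(g-1)r^2}A_{g,r,d}(xy,x,\ldots,x)$ transfers this to the $E$-polynomials. Your remarks on the role of the coprimality hypothesis are also consistent with the paper's use of it.
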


Davesh Maulik and Aaron Pixton announced an independent proof of Theorem \ref{thm: main intro}. Their approach is to make rigorous the physical considerations of \cite{chuang2015parabolic}. They claim that their work will settle the more general conjectures about Higgs bundles with parabolic structures. On the other hand, it would be interesting to extend Schiffmann's (\cite{schiffmann2014indecomposable}) and Schiffmann-Mozgovoy's (\cite{mozgovoy2014counting}) methods to the parabolic case and thus obtain another proof.

\section{Arms and legs}
We begin by stating an elementary formula which relates the generating series of arms and legs and the generating series of weights of partitions, proved in \cite{carlsson2012exts} (we follow notations from \cite{carlsson_vertex_2016}). For a partition $\lambda$ and any cell $\square$ we denote by $a_\lambda(\square)$ and $l_\lambda(\square)$ the arm and leg lengths of $\square$ with respect to $\lambda$. These numbers are non-negative when $\square\in\lambda$ and negative otherwise. For partitions $\mu, \nu$ define
\[
E_{\mu,\nu} = \sum_{\square\in\mu} q^{-a_\nu(\square)} t^{l_\mu(\square)+1} + \sum_{\square\in\nu} q^{a_\mu(\square)+1} t^{-l_\nu(\square)}.
\]
For any partition $\mu$ let
\[
B_\mu = \sum_{\square\in\mu} q^{c(\square)} t^{r(\square)},
\]
where $c(\square), r(\square)$ denote the column and row indices. For any $f$ let $f^*$ be obtained from $f$ by substitution $q\to q^{-1}$, $t\to t^{-1}$.
\begin{lem}\label{lem:munu}
For any partitions $\mu, \nu$ we have
\begin{equation}\label{eq:munu}
E_{\mu, \nu} = qt B_\mu + B_{\nu}^* - (q-1)(t-1) B_\mu B_\nu^*.
\end{equation}
\end{lem}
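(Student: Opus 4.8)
The plan is to prove \eqref{eq:munu} by induction on $|\mu|+|\nu|$, using that both sides vanish when $\mu=\nu=\emptyset$ and that each side changes in the same way when a single box is added. Write $F_{\mu,\nu}$ for the right-hand side of \eqref{eq:munu}. A direct substitution shows that both $E_{\mu,\nu}$ and $F_{\mu,\nu}$ satisfy the symmetry $X_{\mu,\nu}=qt\,X_{\nu,\mu}^*$: for $E$ this is immediate from the definition, by pairing the first sum of $E_{\mu,\nu}$ with the second sum of $E_{\nu,\mu}^*$ (they differ by the overall factor $qt$), and for $F$ it follows from the identity $qt(q^{-1}-1)(t^{-1}-1)=(q-1)(t-1)$. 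Because of this symmetry it suffices to control the effect of adding a box to $\mu$; the residual case $\mu=\emptyset$, $\nu\neq\emptyset$ is then reduced to adding boxes in the first slot of $E_{\nu,\emptyset}$ and $F_{\nu,\emptyset}$.

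Fix an addable corner $s_0=(a,b)$ of $\mu$, so that $b=\mu_a+1$, and set $\mu^+=\mu\cup\{s_0\}$. First I would compute $E_{\mu^+,\nu}-E_{\mu,\nu}$ cell by cell. Exactly three families of terms change. The new cell $s_0$ contributes $q^{-a_\nu(s_0)}t$ to the first sum, since $l_{\mu^+}(s_0)=0$. The cells of $\mu$ in column $b$, which (since $s_0$ is a corner) are precisely $(i,b)$ for $1\le i\le a-1$, have their leg $l_\mu$ raised by one, so their first-sum terms acquire a factor $t$, contributing $(t-1)\sum_{i=1}^{a-1}q^{b-\nu_i}t^{a-i}$. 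The cells of $\nu$ in row $a$, namely $(a,k)$ for $1\le k\le\nu_a$, have their arm $a_\mu$ raised by one, so their second-sum terms acquire a factor $q$, contributing $(q-1)\sum_{k=1}^{\nu_a}q^{b-k}t^{a-\nu'_k}$. All other terms are unchanged, as $a_\nu$ is independent of $\mu$ while $l_\mu$ and $a_\mu$ are altered only in column $b$ and row $a$ respectively. On the other side, $F_{\mu^+,\nu}-F_{\mu,\nu}=q^bt^a-(q-1)(t-1)\,q^{b-1}t^{a-1}B_\nu^*$, because $B_{\mu^+}-B_\mu=q^{b-1}t^{a-1}$.

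The heart of the argument is then the purely $\nu$-dependent identity obtained by equating these two differences and dividing by $q^b$: one must show
\[
q^{-\nu_a}t+(t-1)\sum_{i=1}^{a-1}q^{-\nu_i}t^{a-i}+(q-1)\sum_{k=1}^{\nu_a}q^{-k}t^{a-\nu'_k}=t^a-(q-1)(t-1)\sum_{(i,j)\in\nu}q^{-j}t^{a-i}.
\]
I would prove this by evaluating geometric sums and interchanging the order of summation between rows and columns of $\nu$, using the conjugacy relation $\nu'_k\ge i\iff k\le\nu_i$. Concretely, expanding the double sum on the right over columns $k$ and applying $t^{a-\nu'_k}=t^a-(t-1)\sum_{i=1}^{\nu'_k}t^{a-i}$ turns it into a sum indexed by rows; the telescoping evaluation $(q-1)\sum_{k=c+1}^{d}q^{-k}=q^{-c}-q^{-d}$ then collapses the resulting expressions, the column-indexed term $(q-1)\sum_{k=1}^{\nu_a}q^{-k}t^{a-\nu'_k}$ cancels, and the remaining pieces reassemble precisely into the left-hand side.

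The main obstacle is the bookkeeping in this last step: keeping track of exactly which cells have their arm or leg shifted when the box is added, and correctly matching the column-indexed sum built from $\nu'_k$ against the row-indexed content sum $B_\nu^*$. Once the order-of-summation interchange is set up cleanly through the conjugacy relation, the surviving manipulations are elementary geometric series. As a conceptual check I would also note that \eqref{eq:munu} is equivalent to the equivariant $K$-theoretic computation of $qt\big(\chi(\mathcal{O},\mathcal{O})-\chi(I_\nu,I_\mu)\big)$ for ideal sheaves on $\mathbb{A}^2$ with torus weights $q,t$; this both explains the shape of the right-hand side and furnishes an alternative, geometric proof via the Koszul resolution.
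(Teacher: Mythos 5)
Your proposal is correct, and it takes a genuinely different route from the paper's. The paper inducts on the largest part $\mu_1$, stripping the entire first column of $\mu$ at each step: it computes $E_{\mu,\nu}-qE_{\mu',\nu}$ for $\mu'=(\mu_1-1,\mu_2-1,\ldots)$, matches this against the same recursion for the right-hand side via $B_\mu-qB_{\mu'}=(t^{l(\mu)}-1)/(t-1)$, and checks the base case $E_{\varnothing,\nu}=B_\nu^*$ directly. You instead add a single corner box and exploit the twisted transpose symmetry $E_{\mu,\nu}=qt\,E_{\nu,\mu}^*$ and $qt(q^{-1}-1)(t^{-1}-1)=(q-1)(t-1)$ for the right-hand side --- a structural observation the paper never states, which both explains the self-dual shape of the formula and lets you replace the asymmetric base case by the trivial one $\mu=\nu=\varnothing$. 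I verified your local contributions: $l_{\mu^+}(s_0)=0$ gives the term $q^{b-\nu_a}t$, the cells $(i,b)$, $1\le i\le a-1$, give $(t-1)\sum_{i=1}^{a-1}q^{b-\nu_i}t^{a-i}$, the cells $(a,k)$, $1\le k\le\nu_a$, give $(q-1)\sum_{k=1}^{\nu_a}q^{b-k}t^{a-\nu'_k}$, and $B_{\mu^+}-B_\mu=q^{b-1}t^{a-1}$; and the reduced identity you state after dividing by $q^b$ is true, by exactly the manipulations you indicate: writing $(q-1)(t-1)\sum_{(i,j)\in\nu}q^{-j}t^{a-i}$ row-wise as $(t-1)\sum_{i\ge1}t^{a-i}(1-q^{-\nu_i})$, splitting at $i=a$, converting the tail $i\ge a$ via the conjugacy relation $k\le\nu_i\iff i\le\nu'_k$ into a column sum cancelling $(q-1)(t-1)\sum_{k=1}^{\nu_a}q^{-k}\sum_{m=1}^{\nu'_k-a}t^{-m}$, and collapsing the remaining geometric sums with $(q-1)\sum_{k=1}^{\nu_a}q^{-k}=1-q^{-\nu_a}$ and $(t-1)\sum_{i=1}^{a-1}t^{a-i}=t^a-t$, after which everything vanishes. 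One small point of order: in your induction on $|\mu|+|\nu|$, the case $(\varnothing,\nu)$ is deduced from $(\nu,\varnothing)$, which has the \emph{same} total size, so you must arrange the scheme as you implicitly do --- within each total size, first settle all pairs with nonempty first slot by box removal, then apply the symmetry; with that ordering the induction is well founded. Neither route is shorter: the paper's column recursion is a single clean computation reusing machinery it needs anyway, while yours localizes the change to one cell, halves the work through the $qt$-duality, and your closing identification with the equivariant computation of $qt\bigl(\chi(\mathcal{O},\mathcal{O})-\chi\bigr)$ for ideal sheaves on $\mathbb{A}^2$ is consistent with the paper's attribution of this lemma to Carlsson--Okounkov's Exts paper.
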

\begin{proof}
We prove by induction on the largest part $\mu_1$ of $\mu$ (defined to be $0$ if $\mu=\varnothing$). If $\mu=\varnothing$, we have $a_\mu(\square)=-1-c(\square)$. Therefore
\[
E_{\varnothing,\nu} = \sum_{\square\in\mu} q^{-c(\square)}t^{-l_\nu(\square)}.
\]
For each fixed value of $c(\square)$ the numbers $l_\nu(\square)$ go over the same range as the numbers $r(\square)$. Thus we obtain
\[
E_{\mu,\varnothing} = B_\nu^*.
\]
This establishes the case $\mu_1=0$.

For the induction step let $\mu'$ be obtained from $\mu$ by removing the first column, i.e. $\mu'=(\mu_1-1,\mu_2-1,\ldots)$. Splitting the sum according to whether $\square$ is in the first column we obtain
\[
\sum_{\square\in\mu} q^{-a_\nu(\square)} t^{l_\mu(\square)+1} = 
q \sum_{\square\in\mu'} q^{-a_\nu(\square)} t^{l_\mu(\square)+1}
+ \sum_{i=1}^{l(\mu)} q^{1-\nu_i} t^{l(\mu)-i+1}.
\]
For any cell $\square$ we have
\[
a_\mu(\square) = \begin{cases} a_{\mu'}(\square)+1 & \text{if $r(\square)<l(\mu)$,}\\ -1-c(\square) & \text{otherwise.}\end{cases}
\]
This implies
\[
\sum_{\square\in\nu} q^{a_\mu(\square)+1} t^{-l_\nu(\square)} = q \sum_{\square\in\nu} q^{a_{\mu}'(\square)+1} t^{-l_\nu(\square)} + (1-q)\sum_{\square\in\nu\,:\, r(\square)\geq l(\mu)} q^{-c(\square)} t^{-l_\nu(\square)}.
\]
In the last sum for each fixed value of $c(\square)$ the numbers $l_\nu(\square)$ go over the same range as the numbers $r(\square)-l(\mu)$, so we have
\[
\sum_{\square\in\nu\,:\, r(\square)\geq l(\mu)} q^{-c(\square)} t^{-l_\nu(\square)} = \sum_{\square\in\nu\,:\, r(\square)\geq l(\mu)} q^{-c(\square)} t^{l(\mu)-r(\square)} = \sum_{i=l(\mu)+1}^\infty t^{l(\mu)-i+1} \frac{1-q^{-\nu_i}}{1-q^{-1}}.
\]
Putting things together we have
\[
E_{\mu,\nu} - q E_{\mu',\nu} = \sum_{i=1}^\infty t^{l(\mu)-i+1} (q^{1-\nu_i}-q) + q \sum_{i=1}^{l(\mu)} t^{l(\mu)-i+1}.
\]
The first sum reduces to
\[
\sum_{i=1}^\infty t^{l(\mu)-i+1} (q^{1-\nu_i}-q) = q t^{l(\mu)} (q^{-1}-1) B_\nu^* = (1-q) t^{l(\mu)} B_\nu^*.
\]
The second sum becomes
\[
q \sum_{i=1}^{l(\mu)} t^{l(\mu)-i+1} = qt \frac{t^{l(\mu)}-1}{t-1}.
\]
This implies
\[
E_{\mu,\nu} - q E_{\mu',\nu} = (1-q) t^{l(\mu)} B_\nu^* + qt \frac{t^{l(\mu)}-1}{t-1}
\]
On the other hand we have
\[
B_\mu - q B_{\mu'} = \sum_{i=1}^{l(\mu)} t^{i-1} = \frac{t^{l(\mu)}-1}{t-1}.
\]
Therefore if we denote the right hand side of \eqref{eq:munu} by $E'_{\mu,nu}$ we obtain
\[
E'_{\mu,\nu} - q E'_{\mu',\nu} = qt \frac{t^{l(\mu)}-1}{t-1} + (1-q) B_\nu^* - (q-1)(t-1)B_{\nu}^* \frac{t^{l(\mu)}-1}{t-1}
\]
\[
qt \frac{t^{l(\mu)}-1}{t-1} + (1-q)t^{l(\mu)} B_{\nu}^*.
\]
So $E_{\mu',\nu}=E'_{\mu',\nu}$ implies $E_{\mu,\nu}=E'_{\mu,\nu}$ and the induction step is established.
\end{proof}

For a partition $\mu$ we define $z_i(\mu)$ to match $z_i$ in \cite{schiffmann2014indecomposable}:
\[
z_i(\mu) = t^{-l(\mu)+i} q^{\mu_i}\qquad(i=1,2,\ldots,l(\mu)).
\]
Our notations match after the substitution $(q,z)\to (t,q)$. Note the following generating series identity:
\begin{equation}\label{eq:sumz}
\sum_{i=1}^{l(\mu)} z_i(\mu) = t^{-l(\mu)+1} \left((q-1)B_\mu + \frac{t^{l(\mu)}-1}{t-1}\right).
\end{equation}
What we will actually need is the following generating series:
\[
K_\mu := (1-t) \sum_{i<j} \frac{z_i(\mu)}{z_j(\mu)}.
\]
It can be obtained as follows. Note that the sum $K_\mu$ contains only terms with non-positive powers of $t$. So we can start with
\[
\tilde{K_\mu}:=(1-t) \sum_{i=1}^{l(\mu)} z_i(\mu) \sum_{i=1}^{l(\mu)} z_i(\mu)^{-1}=(1-t) \sum_{i,j=1}^{l(\mu)} \frac{z_i(\mu)}{z_j(\mu)},
\]
and take only non-positive powers of $t$. Let $L$ be the operator
\[
L(t^i q^j) = \begin{cases}t^i q^j & (i\leq 0)\\ 0 & (i>0)\end{cases}.
\]
Then 
\[
K_\mu = L(\tilde{K_\mu}) - l(\mu).
\]
Note that we had to subtract $l(\mu)$ to cancel the contribution from the terms $i=j$ appearing in $\tilde K_\mu$. We can calculate $\tilde{K_\mu}$ using Lemma \ref{lem:munu} and \eqref{eq:sumz}:
\[
\tilde{K_\mu} = (1-t) \left((q-1)B_\mu + \frac{t^{l(\mu)}-1}{t-1}\right) \left((q^{-1}-1)B_\mu^* + \frac{t^{-l(\mu)}-1}{t^{-1}-1}\right)
\]
\[
=(q^{-1}-1)E_{\mu,\mu} - t^{l(\mu)} (q^{-1}-1) B_\mu^* + t^{1-l(\mu)}(q-1) B_\mu - (t^{l(\mu)}-1)\sum_{i=0}^{l(\mu)-1} t^{-i},
\]
from which it is clear that
\[
L(\tilde{K_\mu}) = (q^{-1}-1) L(E_{\mu,\mu}) + t^{1-l(\mu)} (q-1) B_\mu + \sum_{i=0}^{l(\mu)-1} t^{-i}.
\]
\[
= (q^{-1}-1) \sum_{\square\in\mu} q^{a_\mu(\square)+1} t^{-l_\mu(\square)} + \sum_{i=1}^{l(\mu)} z_i(\mu).
\]
The conclusion is the following
\begin{prop}
For any partition $\mu$ we have
\[
(1-t) \sum_{i<j} \frac{z_i(\mu)}{z_j(\mu)} = (q^{-1}-1) \sum_{\square\in\mu} q^{a_\mu(\square)+1} t^{-l_\mu(\square)} + \sum_{i=1}^{l(\mu)} (z_i(\mu)-1).
\]
\end{prop}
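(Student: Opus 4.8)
The plan is to realise the left-hand side $K_\mu = (1-t)\sum_{i<j} z_i(\mu)/z_j(\mu)$ as a truncation of the full double sum over all pairs $(i,j)$, which factors into a product of two single sums, and then to evaluate that product using \eqref{eq:sumz} together with Lemma \ref{lem:munu} specialised to $\nu=\mu$.

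First I would note that each ratio is a single monomial, $z_i(\mu)/z_j(\mu) = t^{\,i-j} q^{\mu_i-\mu_j}$, with $t$-exponent $i-j$. Hence the summand $(1-t)z_i/z_j$ carries the two $t$-exponents $i-j$ and $i-j+1$, which are both $\le 0$ when $i<j$, both $>0$ when $i>j$, and reduce to the expression $1-t$ when $i=j$. Applying the operator $L$ that keeps only non-positive powers of $t$ therefore erases the block $i>j$, leaves the block $i<j$ untouched, and replaces each of the $l(\mu)$ diagonal terms $1-t$ by $1$. This is exactly the relation $K_\mu = L(\tilde K_\mu) - l(\mu)$ for the factored sum $\tilde K_\mu = (1-t)\big(\sum_i z_i(\mu)\big)\big(\sum_j z_j(\mu)^{-1}\big)$, the correction $-l(\mu)$ removing the spurious diagonal. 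This degree bookkeeping is, to my mind, the one genuinely delicate point, since it is precisely what produces that correction term.

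Next I would put $\tilde K_\mu$ into closed form. By \eqref{eq:sumz} the first factor equals $t^{1-l(\mu)}\big((q-1)B_\mu + \tfrac{t^{l(\mu)}-1}{t-1}\big)$, and the substitution $q\to q^{-1}$, $t\to t^{-1}$ turns it into the companion formula for $\sum_i z_i(\mu)^{-1}$ in terms of $B_\mu^*$. The prefactors $t^{1-l(\mu)}$ and $t^{l(\mu)-1}$ cancel in the product, leaving $\tilde K_\mu = (1-t)\big((q-1)B_\mu + \tfrac{t^{l(\mu)}-1}{t-1}\big)\big((q^{-1}-1)B_\mu^* + \tfrac{t^{-l(\mu)}-1}{t^{-1}-1}\big)$. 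Expanding this and invoking Lemma \ref{lem:munu} with $\nu=\mu$, that is $E_{\mu,\mu} = qtB_\mu + B_\mu^* - (q-1)(t-1)B_\mu B_\mu^*$, to trade the cross term $B_\mu B_\mu^*$ for $E_{\mu,\mu}$, I would obtain
\[
\tilde K_\mu = (q^{-1}-1)E_{\mu,\mu} - t^{l(\mu)}(q^{-1}-1)B_\mu^* + t^{1-l(\mu)}(q-1)B_\mu - (t^{l(\mu)}-1)\sum_{i=0}^{l(\mu)-1} t^{-i}.
\]

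Finally I would apply $L$ term by term, tracking $t$-degrees. As $l_\mu(\square)\ge 0$ on $\mu$, the summand $q^{-a_\mu(\square)}t^{l_\mu(\square)+1}$ of $E_{\mu,\mu}$ has strictly positive $t$-degree and vanishes, so $L(E_{\mu,\mu}) = \sum_{\square\in\mu} q^{a_\mu(\square)+1}t^{-l_\mu(\square)}$; the term $t^{l(\mu)}B_\mu^*$ has only positive $t$-powers and dies; the term $t^{1-l(\mu)}B_\mu$ has only non-positive $t$-powers and survives intact; and the last term contributes only its $\sum_{i=0}^{l(\mu)-1}t^{-i}$ part, the $t^{l(\mu)}$ piece being killed. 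Collecting these and using \eqref{eq:sumz} once more to recognise $t^{1-l(\mu)}(q-1)B_\mu + \sum_{i=0}^{l(\mu)-1} t^{-i} = \sum_{i=1}^{l(\mu)} z_i(\mu)$, I reach $L(\tilde K_\mu) = (q^{-1}-1)\sum_{\square\in\mu} q^{a_\mu(\square)+1}t^{-l_\mu(\square)} + \sum_{i=1}^{l(\mu)} z_i(\mu)$. Subtracting the $l(\mu)$ from the first step and folding it into the last sum as $\sum_{i=1}^{l(\mu)}(z_i(\mu)-1)$ delivers the stated identity. Beyond the $t$-degree bookkeeping for $L$, every step is forced by Lemma \ref{lem:munu} and \eqref{eq:sumz}, so I expect no further obstruction.
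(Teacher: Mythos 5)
Your proof is correct and follows essentially the same route as the paper: the same truncation operator $L$ applied to the factored sum $\tilde K_\mu$ with the $-l(\mu)$ diagonal correction, the same closed form via \eqref{eq:sumz}, the expansion using Lemma \ref{lem:munu} with $\nu=\mu$, and the same term-by-term $t$-degree analysis. The only difference is that you spell out the degree bookkeeping behind $K_\mu = L(\tilde K_\mu) - l(\mu)$ in more detail than the paper does, which is a welcome clarification rather than a deviation.
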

Converting additive generating functions to multiplicative with an extra variable $u$ we obtain
\begin{cor}\label{cor:product arms legs}
For any partition $\mu$ we have
\[
\prod_{i<j} \frac{1-t u \frac{z_i(\mu)}{z_j(\mu)}}{1-u \frac{z_i(\mu)}{z_j(\mu)}} = \prod_{\square\in\mu} \frac{1-u q^{a_\mu(\square)+1} t^{-l_\mu(\square)}}{1-u q^{a_\mu(\square)} t^{-l_\mu(\square)}} \prod_{i=1}^{l(\mu)} \frac{1-u}{1-u z_i(\mu)}.
\]
\end{cor}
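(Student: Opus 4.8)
The plan is to obtain the stated product identity by applying, to both sides of the preceding Proposition, the standard operation that turns an additive generating function of monomials into a multiplicative one. Introduce a bookkeeping variable $u$ and, for a Laurent polynomial $f=\sum_m c_m M_m\in\ZZ[q^{\pm1},t^{\pm1}]$ expressed in the monomial basis, set
\[
\Phi(f)\eq\prod_m (1-uM_m)^{-c_m},
\]
an element of the multiplicative group of rational functions in $q,t,u$ taking the value $1$ at $u=0$. Since a Laurent polynomial determines its coefficient vector $(c_m)$ uniquely, $\Phi$ is well-defined; and from $\log(1-uM)^{-1}=\sum_{k\geq1}u^kM^k/k$ one gets $\log\Phi(f)=\sum_{k\geq1}\tfrac{u^k}{k}\psi_k(f)$, where $\psi_k(f)=f(q^k,t^k)$ is the $k$-th Adams operation (each $z_i(\mu)=t^{-l(\mu)+i}q^{\mu_i}$ is a monomial in $q,t$, so $f$ genuinely lies in $\ZZ[q^{\pm1},t^{\pm1}]$). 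As each $\psi_k$ is additive, $\log\Phi$ is additive, so $\Phi(f+g)=\Phi(f)\Phi(g)$: the map $\Phi$ carries the additive group of Laurent polynomials to a multiplicative group of rational functions. All sums occurring below are finite, so there are no convergence issues and every expression is a genuine rational function.

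First I would compute $\Phi$ on the three elementary binomials that build up the Proposition. A binomial $M-M'$ maps to $\Phi(M)\Phi(-M')=(1-uM)^{-1}(1-uM')=\frac{1-uM'}{1-uM}$. Applying this with $(1-t)\tfrac{z_i(\mu)}{z_j(\mu)}=\tfrac{z_i(\mu)}{z_j(\mu)}-t\tfrac{z_i(\mu)}{z_j(\mu)}$ gives $\frac{1-tu\,z_i(\mu)/z_j(\mu)}{1-u\,z_i(\mu)/z_j(\mu)}$; with $(q^{-1}-1)q^{a+1}t^{-l}=q^{a}t^{-l}-q^{a+1}t^{-l}$ (writing $a=a_\mu(\square)$, $l=l_\mu(\square)$) gives $\frac{1-uq^{a+1}t^{-l}}{1-uq^{a}t^{-l}}$; and with $z_i(\mu)-1$ gives $\frac{1-u}{1-u\,z_i(\mu)}$.

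Now I would simply apply the homomorphism $\Phi$ to the identity of the Proposition,
\[
(1-t)\sum_{i<j}\frac{z_i(\mu)}{z_j(\mu)}\eq(q^{-1}-1)\sum_{\square\in\mu} q^{a_\mu(\square)+1}t^{-l_\mu(\square)}+\sum_{i=1}^{l(\mu)}\bigl(z_i(\mu)-1\bigr).
\]
The left-hand side becomes $\prod_{i<j}\frac{1-tu\,z_i(\mu)/z_j(\mu)}{1-u\,z_i(\mu)/z_j(\mu)}$ and the right-hand side becomes $\prod_{\square\in\mu}\frac{1-uq^{a_\mu(\square)+1}t^{-l_\mu(\square)}}{1-uq^{a_\mu(\square)}t^{-l_\mu(\square)}}\cdot\prod_{i=1}^{l(\mu)}\frac{1-u}{1-u\,z_i(\mu)}$, which is exactly the asserted identity.

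The only point needing care — and the one I would flag explicitly — is that $\Phi$ must be applied to the Proposition as an equality of Laurent polynomials, not term by term: distinct pairs $(i,j)$ or cells $\square$ may contribute the same monomial, and a monomial may even appear with opposite signs on the two sides and cancel. The homomorphism property of $\Phi$ is precisely what guarantees that such coincidences are handled consistently (repeated factors multiply, and a monomial of net coefficient $0$ contributes the factor $1$, i.e. a cancellation of numerator against denominator). Thus once well-definedness and multiplicativity of $\Phi$ are in hand, the Corollary follows immediately from the Proposition with no further computation.
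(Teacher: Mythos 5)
Your proposal is correct and takes essentially the same approach as the paper: there the Corollary is deduced from the Proposition by the one-line remark ``Converting additive generating functions to multiplicative with an extra variable $u$,'' which is precisely the additive-to-multiplicative conversion your homomorphism $\Phi$ carries out. Your explicit verification of well-definedness, additivity via Adams operations, and the handling of coinciding or cancelling monomials simply formalizes what the paper leaves implicit.
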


Note that the left hand side contains ``non-symmetric'' ratios $\frac{z_i(\mu)}{z_j(\mu)}$ for $i<j$, while the right hand side contains ``simple terms'' $z_i(\mu)$ and $1$, ``correct arm-leg terms'' $q^{a_\mu(\square)+1} t^{-l_\mu(\square)}$ and ``incorrect arm-leg terms'' $q^{a_\mu(\square)} t^{-l_\mu(\square)}$. Our strategy is to trade incorrect arm-leg terms in Schiffmann's formula for non-symmetric ratios, which will complement or cancel other non-symmetric ratios so that the result contains only correct arm-leg terms and something symmetric.

\section{Schiffmann's terms}\label{sec:schiffmanns terms}
Let $X$ be a smooth projective curve over $\FF_q$ of genus $g$ with zeta function
\[
\zeta_X(x) = \frac{\prod_{i=1}^{2g} (1-\alpha_i x)}{(1-x)(1-qx)}.
\]
Let us order $\alpha_i$ in such a way that $\alpha_{i+g}=\frac{q}{\alpha_i}$ holds. We will treat $\alpha_1, \alpha_2, \ldots, \alpha_g$ as formal variables and set $\alpha_{i+g}=\frac{q}{\alpha_i}$. An alternative way to think of the parameters $\alpha_i$ is to view them as the exponentials of the chern roots of the Hodge bundle on the moduli space of curves times $q^{\frac12}$. The expressions we will be writing will depend on $q,z,\alpha_1,\ldots,\alpha_g$. There is a correspondence between these variables and the variables from \cite{mellit2016integrality} given as follows:
\begin{equation}\label{eq:qzalpha to tqu}
q,z,\alpha_1,\ldots,\alpha_g \to t,q,u_1^{-1},\ldots,u_g^{-1}.
\end{equation}

The formula of Schiffmann (see \cite{schiffmann2014indecomposable}, \cite{mozgovoy2014counting}) involves a sum over partitions
\[
\Omega := \sum_{\mu} \Omega_\mu T^{|\mu|}.
\]
For each partition $\mu$ the corresponding coefficient is
\[
\Omega_\mu := q^{(g-1)\langle\mu,\mu\rangle} J_\mu H_\mu.
\]
Here $\langle\mu,\mu\rangle=\sum_i \mu_i'^2$ where $\mu'$ is the conjugate partition of $\mu$. We will proceed defining $J_\mu$ and $H_\mu$ and taking them apart in the process. We have
\[
J_\mu = \prod_{\square\in\mu} \frac{\prod_{i=1}^{2g} (1-\alpha_i q^{-1-l(\square)} z^{a(\square)})}{(1- q^{-1-l(\square)} z^{a(\square)})(1-q^{-l(\square)} z^{a(\square)})_{\neq0}}.
\]
The notation $(-)_{\neq0}$ means we omit the corresponding factor if it happens to be zero. This naturally splits as follows:
\[
J_\mu = \prod_{\square\in\mu} \frac{\prod_{i=1}^{g} (1-\alpha_i q^{-1-l(\square)} z^{a(\square)})}{1- q^{-1-l(\square)} z^{a(\square)}} \prod_{\square\in\mu} \frac{\prod_{i=1}^{g} (1-\alpha_i^{-1} q^{-l(\square)} z^{a(\square)})}{(1- q^{-l(\square)} z^{a(\square)})_{\neq0}}.
\]
Applying Corollary \ref{cor:product arms legs} we obtain
\[
J_\mu = \prod_{\square\in\mu} \frac{\prod_{i=1}^{g} (1-\alpha_i q^{-1-l(\square)} z^{a(\square)})}{1- q^{-1-l(\square)} z^{a(\square)}} \times \prod_{\square\in\mu} \frac{\prod_{i=1}^{g} (1-\alpha_i^{-1} q^{-l(\square)} z^{a(\square)+1})}{1- q^{-l(\square)} z^{a(\square)+1}}
\]
\[
\times \prod_{i<j} \prod_{k=1}^g \frac{1-\alpha_k^{-1} \frac{z_i(\mu)}{z_j(\mu)}}{1-q \alpha_k^{-1} \frac{z_i(\mu)}{z_j(\mu)}}
\times \prod_{i=1}^{l(\mu)} \prod_{k=1}^g \frac{1-\alpha_k^{-1}}{1-\alpha_k^{-1}z_i(\mu)}
\times \prod_{i<j} \frac{\left(1-q\frac{z_i(\mu)}{z_j(\mu)}\right)_{\neq0}}{1-\frac{z_i(\mu)}{z_j(\mu)}} \times \prod_{i=1}^{l(\mu)} (1-z_i(\mu)),
\]
where $z_i(\mu)=q^{-l(\mu)+i} z^{\mu_i}$ coincides with Schiffmann's $z_{n-i+1}$. Denote the four products above by $A,B,C,D$. Note that $\sum l(\square) + \sum (l(\square)+1) = \langle \mu,\mu\rangle$, so $q^{\langle\mu,\mu\rangle}$ together with the first two products produce
\[
\frac{\prod_{i=1}^g N_\mu(\alpha_i^{-1})}{N_\mu(1)},
\]
where $N_\mu$ is the arm-leg product as in \cite{mellit2016integrality}:
\begin{equation}\label{eq: Nmu definition}
N_\mu(u)=\prod_{\square\in\mu}(z^{a(\square)}-u q^{1+l(\square)})(z^{a(\square)+1}-u^{-1} q^{l(\square)}).
\end{equation}
So we have 
\[
q^{\langle\mu,\mu\rangle} J_\mu = \frac{\prod_{i=1}^g N_\mu(\alpha_i^{-1})}{N_\mu(1)} ABCD,
\]
where 
\[
A = \prod_{i<j} \prod_{k=1}^g \frac{1-\alpha_k^{-1} \frac{z_i(\mu)}{z_j(\mu)}}{1-q \alpha_k^{-1} \frac{z_i(\mu)}{z_j(\mu)}},\qquad
B=\prod_{i=1}^{l(\mu)} \prod_{k=1}^g \frac{1-\alpha_k^{-1}}{1-\alpha_k^{-1}z_i(\mu)}
\]
\[
C = \prod_{i<j} \frac{\left(1-q\frac{z_i(\mu)}{z_j(\mu)}\right)_{\neq0}}{1-\frac{z_i(\mu)}{z_j(\mu)}}
\qquad
D = \prod_{i=1}^{l(\mu)} (1-z_i(\mu)).
\]
We proceed by defining $H_\mu$. Let
\[
\tilde\zeta(x) = x^{1-g} \zeta(x) = \frac{\prod_{k=1}^{g} x^{-1} (1-\alpha_k x)(1-q \alpha_k^{-1} x)}{x^{-1}(1-x)(1-qx)}.
\]
Let $L(z_1,\ldots,z_{l(\mu)})$ be the rational function (note that we reversed the order of $z_i$)
\[
L(z_1,\ldots,z_{l(\mu)}) = \frac{1}{\prod_{i>j} \tilde\zeta\left(\frac{z_i}{z_j}\right)} \sum_{\sigma\in S_{l(\mu)}} \sigma  \left\{\prod_{i<j} \tilde\zeta\left(\frac{z_i}{z_j}\right) \frac{1}{\prod_{i<l(\mu)} \left(1-q \frac{z_{i+1}}{z_{i}}\right)} \frac{1}{1-z_1}\right\}.
\]
Note that $\tilde\zeta$ appears in the numerator as many times as in the denominator, so it can be multiplied by a constant without changing $L$. So we replace $\tilde\zeta$ with something more resembling the other products we have seen:
\[
\tilde\zeta(x) = \frac{\prod_{k=1}^{g} (1-\alpha_k^{-1} x^{-1})(1-q \alpha_k^{-1} x)}{(1-x^{-1})(1-qx)}.
\]
$H_\mu$ is defined as the iterated residue (remember that our ordering of $z_i$ is the opposite of Schiffman's)
\[
H_\mu = \res_{z_i=z_i(\mu)} L(z_1,\ldots,z_{l(\mu)}) \prod_{i\,:\, \mu_i=\mu_{i+1}} \frac{d z_{i+1}}{z_{i+1}}.
\]
Note that the only poles $L$ can have at $z_i=z_i(\mu)$ are coming from factors of the form $1-q \frac{z_i}{z_{i+1}}$ for $i$ such that $\mu_i=\mu_{i+1}$. Each such factor can appear at most once in the denominator of $L$. We have
\[
\res_{z_{i+1}=q z_{i}} \frac{1}{1-q\frac{z_i}{z_{i+1}}} \frac{d z_{i+1}}{z_{i+1}} = 1.
\]
Thus we will obtain the same result if we multiply $L$ by the product of  these factors and then evaluate at $z_i=z_i(\mu)$. Note that $C$ has precisely the same factors removed. Therefore we have
\[
C H_\mu = \left(\prod_{i<j} \frac{1-q\frac{z_i}{z_j}}{1-\frac{z_i}{z_j}} L\right)(z_1(\mu),\ldots,z_{l(\mu)}(\mu)).
\]
Putting in $A$ as well we obtain a nice expression:
\[
AC H_\mu = \left(\prod_{i\neq j} \frac{ 1-q\frac{z_i}{z_j}}{\prod_{k=1}^g 1-q \alpha_k^{-1} \frac{z_i}{z_j}} \sum_{\sigma\in S_{l(\mu)}} \sigma  \left\{\cdots\right\} \right)(z_1(\mu),\ldots,z_{l(\mu)}(\mu)).
\]
We see that the product is symmetric in $z_i$, so it can be moved inside the summation. Since $B$ and $D$ are symmetric, they can also be moved inside the summation. After some cancellations we arrive at the following. Define for any $n$
\begin{multline}\label{eq:f definition}
f(z_1,\ldots,z_n) = \prod_i \prod_{k=1}^g \frac{1-\alpha_k^{-1}}{1- \alpha_k^{-1} z_i}
\\
\times \sum_{\sigma\in S_n}\sigma \left\{
\prod_{i>j}\left(\frac{1}{1-\frac{z_i}{z_j}} \prod_{k=1}^g \frac{1-\alpha_k^{-1} \frac{z_i}{z_j}}{1-q \alpha_k^{-1} \frac{z_i}{z_j}}\right)  \prod_{i> j+1} (1-q \tfrac{z_i}{z_j}) \prod_{i\geq 2} (1-z_i)
 \right\}.
\end{multline}
Then 
\[
ABCD H_\mu = f(z_1(\mu),\ldots,z_{l(\mu)}(\mu)).
\]
Summarizing we obtain
\begin{prop}
For any partition $\mu$ the term $\Omega_\mu$ is given by
\[
\Omega_\mu = \frac{f_\mu \prod_{i=1}^g N_\mu(\alpha_i^{-1})}{N_\mu(1)},\qquad f_\mu=f(z_1(\mu),\ldots,z_{l(\mu)}(\mu)), 
\]
where $z_i(\mu)=q^{-l(\mu)+i} z^{\mu_i}$, and $N$, $f$ are defined in \eqref{eq: Nmu definition}, \eqref{eq:f definition}.
\end{prop}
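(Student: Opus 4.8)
The plan is to assemble the statement from the identities established in the course of this section; almost all of the substantive work is already in place, and what remains is to combine the pieces and verify the bookkeeping of prefactors. Starting from the definition $\Omega_\mu = q^{(g-1)\langle\mu,\mu\rangle} J_\mu H_\mu$, the first step is to record the factorization of $J_\mu$ obtained by applying Corollary~\ref{cor:product arms legs} to the two ``simple'' products in $J_\mu$, which writes $J_\mu$ as the product of its two arm--leg products with the four factors $A,B,C,D$. The arithmetic point is that multiplying the two arm--leg products by $q^{(g-1)\langle\mu,\mu\rangle}$ turns each of their linear factors into the matching factor $(z^{a(\square)}-\ldots)$ or $(z^{a(\square)+1}-\ldots)$ of $N_\mu$; one checks that the monomial prefactors produced by these conversions carry no net power of $z$ or $\alpha_i$ and no net sign, and that their total $q$-power is exactly $q^{(g-1)\langle\mu,\mu\rangle}$, which follows from $\sum_{\square\in\mu}(2l(\square)+1)=\langle\mu,\mu\rangle$. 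This gives
\[
q^{(g-1)\langle\mu,\mu\rangle} J_\mu = \frac{\prod_{i=1}^g N_\mu(\alpha_i^{-1})}{N_\mu(1)}\,ABCD.
\]

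Next I would insert the residue computation of $H_\mu$. The key observation is that the only poles of $L$ at $z_i=z_i(\mu)$ come from the factors $1-q\,z_i/z_{i+1}$ with $\mu_i=\mu_{i+1}$, each occurring at most once, together with the evaluation $\res_{z_{i+1}=q z_i}(1-q z_i/z_{i+1})^{-1}\,dz_{i+1}/z_{i+1}=1$. Hence the iterated residue defining $H_\mu$ equals the result of first multiplying $L$ by precisely these factors and then setting $z_i=z_i(\mu)$; since $C$ consists of exactly these factors with the exceptional ones deleted, one obtains that $C H_\mu$ is $\prod_{i<j}\frac{1-q z_i/z_j}{1-z_i/z_j}\,L$ evaluated at $z_i=z_i(\mu)$. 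Folding in $A$ produces the prefactor $\prod_{i\neq j}\bigl(1-q z_i/z_j\bigr)\big/\prod_{k}\bigl(1-q\alpha_k^{-1}z_i/z_j\bigr)$ in front of the symmetrized bracket, and because this prefactor, together with $B$ and $D$, is symmetric in the $z_i$, all three may be pulled inside the $S_{l(\mu)}$-sum; after the cancellations indicated above the symmetrized expression is exactly $f$ of \eqref{eq:f definition}, so that $ABCD\,H_\mu = f_\mu$.

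Combining the two displays yields
\[
\Omega_\mu = q^{(g-1)\langle\mu,\mu\rangle} J_\mu \cdot H_\mu = \frac{\prod_{i=1}^g N_\mu(\alpha_i^{-1})}{N_\mu(1)}\,ABCD\,H_\mu = \frac{f_\mu\,\prod_{i=1}^g N_\mu(\alpha_i^{-1})}{N_\mu(1)},
\]
which is the assertion. I expect the main obstacle to be the rigorous justification of the residue-to-evaluation step together with the symmetrization. One must confirm that for each $i$ with $\mu_i=\mu_{i+1}$ the factor $1-q z_i/z_{i+1}$ appears in the denominator to order at most one, so that multiplying by the matching $C$-factors clears all the relevant poles and the iterated residue genuinely collapses to a plain evaluation; and one must verify that the recombination $\prod_{i<j}\frac{1-q z_i/z_j}{1-z_i/z_j}\cdot A$ really equals the claimed product over $i\neq j$, so that it is symmetric and commutes with the $S_{l(\mu)}$-action. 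By contrast, checking that the $N_\mu$-conversion contributes precisely $q^{(g-1)\langle\mu,\mu\rangle}$ and no stray monomial is routine bookkeeping via $\sum_{\square}(2l(\square)+1)=\langle\mu,\mu\rangle$.
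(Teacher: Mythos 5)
Your proposal follows the paper's own derivation essentially step for step: the same factorization of $J_\mu$ into the two arm--leg products times $A,B,C,D$ via Corollary~\ref{cor:product arms legs}, the same conversion to $\prod_{i=1}^g N_\mu(\alpha_i^{-1})/N_\mu(1)$ using $\sum_{\square\in\mu}(2l(\square)+1)=\langle\mu,\mu\rangle$, and the same residue-to-evaluation collapse of $H_\mu$ (clearing the simple poles from the factors $1-q z_i/z_{i+1}$ with $\mu_i=\mu_{i+1}$, which are exactly the factors deleted from $C$) followed by pulling the symmetric prefactor together with $B$ and $D$ inside the $S_{l(\mu)}$-symmetrization to recognize $f$. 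If anything, your bookkeeping is slightly more careful than the text, which at one point writes $q^{\langle\mu,\mu\rangle}J_\mu$ where your $q^{(g-1)\langle\mu,\mu\rangle}J_\mu$ is the exponent that actually matches $\Omega_\mu=q^{(g-1)\langle\mu,\mu\rangle}J_\mu H_\mu$.
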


\begin{example}\label{ex:f small n}
Let us calculate $f$ in a few cases. It is convenient to set
\[
P(x) = \prod_{i=1}^g (1-\alpha_i^{-1} x).
\]
We have
\[
f(z_1) = \frac{P(1)}{P(z_1)}
\]
\[
 f(z_1,z_2) = \frac{P(1)^2}{P(z_1)P(z_2)(z_1-z_2)}\left(z_1(1-z_2) \frac{P(\tfrac{z_2}{z_1})}{P(q\tfrac{z_2}{z_1})} - z_2(1-z_1) \frac{P(\tfrac{z_1}{z_2})}{P(q\tfrac{z_1}{z_2})}\right)
\]
Note that the denominator of this expression is $P(z_1) P(z_2) P(q\tfrac{z_1}{z_2})P(q\tfrac{z_2}{z_1})$ if no cancellations happen. If $z_2=q z_1$, the denominator reduces to $P(z_1) P(z_2) P(q^2)$, so it has only $3$ $P$-factors instead of $4$.
\end{example}

\section{Combinatorics of the function $f$}
\subsection{Bounding denominators}
First we analyse denominators of $f$ defined in \eqref{eq:f definition}. For generic values of $z_i$ the denominator of $f$ can be as bad as the full product 
\[
\prod_{i} P(z_i) \prod_{i\neq j} P(q \tfrac{z_i}{z_j}),
\]
where $P(x)=\prod_{k=1}^g (1-\alpha_k^{-1} x)$. Pick numbers $r_1, r_2,\cdots$ such that $\sum_m r_m=n$. Split $z_1, z_2, \ldots, z_n$ into a union of subsequences of sizes $r_1$, $r_2$, \ldots. Let $j_m=1+\sum_{i<m} r_i$. For each $m$ the $m$-th subsequence looks like $z_{j_m}, z_{j_m+1}, \ldots, z_{j_m+r_m-1}$. Suppose each subsequence forms a geometric progression with quotient $q$:
\[
z_{j_m+i} = q^i z_{j_m}\qquad (i< r_m)
\]
Then $f$ can be viewed as a function of variables $z_{j_m}$. The denominator can be bounded as follows
\begin{prop}
The following expression is a Laurent polynomial:
\[
f \prod_{i=1}^n \left( P(z_i)\prod_{m:\, j_m+r_m> i} P(q^{r_m} \tfrac{z_{j_m}}{z_i}) \prod_{m:\, j_m> i} P(q \tfrac{z_i}{z_{j_m}})\right)
\]
\end{prop}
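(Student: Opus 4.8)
The plan is to prove the estimate by controlling the poles of $f$ locus by locus after the variables have been collapsed onto the geometric progressions $z_{j_m+i}=q^i z_{j_m}$, and to show that every surviving pole is cleared by a factor of the claimed product. I would begin by recording the a priori denominator of a single term of the symmetrization \eqref{eq:f definition}. Writing $P(x)=\prod_{k=1}^g(1-\alpha_k^{-1}x)$ and collecting $P$-factors, the summand indexed by $\sigma$ has $P$-denominator dividing $\prod_i P(z_i)\,\prod_{i>j}P\!\left(q\tfrac{z_{\sigma(i)}}{z_{\sigma(j)}}\right)$, numerator $P$-factors $\prod_{i>j}P\!\left(\tfrac{z_{\sigma(i)}}{z_{\sigma(j)}}\right)$, and the separate rational denominator $\prod_{i>j}\bigl(1-\tfrac{z_{\sigma(i)}}{z_{\sigma(j)}}\bigr)$. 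Since each $P$-factor occurs to first power in every summand, before specialization the common denominator of the whole sum carries each $P$-factor at most once; the task is therefore to determine which factors genuinely survive once the progressions are imposed and to verify that the $z_a=z_b$ poles disappear in the symmetric sum, as they already do for $n=2$ in Example \ref{ex:f small n}, where $z_1-z_2$ divides the bracket.

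Next I would do the specialization bookkeeping. Under $z_{j_m+i}=q^i z_{j_m}$ I classify each cross-factor $P(q z_a/z_b)$ by the groups containing $a$ and $b$. If $a,b$ lie in the same group the argument is a pure power of $q$, so $P(q z_a/z_b)$ is a nonzero constant and contributes no pole; likewise every within-group factor $1-z_a/z_b$ becomes a nonzero constant. Hence only interactions between distinct groups $m\neq m'$ can produce poles, and these depend only on the ratios $z_{j_m}/z_{j_{m'}}$, the relevant $P$-arguments being of the form $q^c\,z_{j_m}/z_{j_{m'}}$ with $c$ determined by the positions of $a,b$ in their progressions. Tabulating the exponents $c$ produced by the product in the statement then shows that the two boundary families $P(q^{r_m}z_{j_m}/z_i)$ and $P(q z_i/z_{j_m})$ are exactly the factors $P(q^c z_{j_m}/z_{j_{m'}})$ sitting at the two extreme ends of the admissible range, for each ordered pair of groups.

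The heart of the argument, and the hardest step, is that every interior cross-group factor cancels. Here I would exploit the telescoping built into the ratio $\tfrac{P(z_a/z_b)}{P(q z_a/z_b)}$: along a progression consecutive arguments differ by the factor $q$, so a chain $\tfrac{P(q^a w)}{P(q^{a+1}w)}\cdots\tfrac{P(q^{b-1}w)}{P(q^b w)}$ collapses to $\tfrac{P(q^a w)}{P(q^b w)}$. Concretely I would reorganize the symmetrization by first summing over the internal orderings of each progression with the interleaving of the groups held fixed; the claim is that this inner sum telescopes the numerator and denominator $P$-factors so that only the two boundary factors survive for each pair of groups, while simultaneously the transposition pairing $\sigma\leftrightarrow\sigma\cdot(a\,b)$ cancels the poles at $z_a=z_b$ via the identity $\tfrac{1}{1-x}+\tfrac{1}{1-x^{-1}}=1$. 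The main obstacle is precisely this step: one must show the telescoping is compatible with the surviving factors $\tfrac{1}{1-z_a/z_b}$, $(1-q z_a/z_b)$ and $(1-z_a)$ for every interleaving, and — crucially — that no interior argument $q^c z_{j_m}/z_{j_{m'}}$ survives even with multiplicity, since a single summand can a priori produce the same pole locus more than once (for instance the pairs $(a,b)$ and $(qa,qb)$ give identical arguments). Controlling these multiplicities, and not merely the set of pole loci, is where the real work lies.

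Finally, once the surviving poles are pinned down to the two boundary families and each is shown to occur to first order, it follows that multiplying $f$ by $\prod_i\bigl(P(z_i)\prod_{m:\,j_m+r_m>i}P(q^{r_m}z_{j_m}/z_i)\prod_{m:\,j_m>i}P(q z_i/z_{j_m})\bigr)$ clears every denominator, so the product is a Laurent polynomial. I would close by checking the accounting against Example \ref{ex:f small n}: for $n=2$ and a single group $r_1=2$ the boundary families reduce to $P(z_1)P(z_2)$ up to the harmless constants $P(q)$ and $P(q^2)$, matching the reduction of the denominator noted there.
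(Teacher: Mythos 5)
There is a genuine gap, and in fact your step 2 is false. You dismiss within-group factors as harmless: ``if $a,b$ lie in the same group the argument is a pure power of $q$, so $P(qz_a/z_b)$ is a nonzero constant and contributes no pole.'' But $P(q^c)=\prod_{k=1}^g(1-\alpha_k^{-1}q^c)$ is not a unit: when it survives in a denominator it destroys Laurent polynomiality in the $\alpha_k$ (which is exactly what the proposition must deliver, since it feeds Corollary \ref{cor:laurent} and thence the conclusion $\HH'_k\in\QQ(t,q)[\alpha_1^{\pm1},\ldots,\alpha_g^{\pm1}]$ in Theorem \ref{thm:main tech}). Example \ref{ex:f small n} exhibits precisely such a pole: for $z_2=qz_1$ the denominator of $f(z_1,z_2)$ is $P(z_1)P(z_2)P(q^2)$, with the within-group factor $P(q^2)$ genuinely present. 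Consistently, the clearing product in the statement contains factors with pure $q$-power arguments --- for $i$ in the $m$-th group it includes $P(q^{r_m}z_{j_m}/z_i)=P(q^{r_m-(i-j_m)})$, and for $n=1$ the bound reads $fP(z_1)P(q)$ --- so your reduction ``only interactions between distinct groups can produce poles'' cannot be right, and your final consistency check, which waves these away as ``harmless constants,'' misreads the example. In the paper's proof most of the actual work, cases \eqref{eq:case3} and \eqref{eq:case4}, is devoted exactly to these same-subsequence interactions: after telescoping, the leftover denominator there is either $P(1)$, which must be cancelled against the prefactor $\prod_i P(1)$ coming from the definition of $f$, or $P(q^{r_m}z_{j_m}/z_i)$; your plan has no mechanism for either cancellation.

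Separately, the step you yourself flag as the heart --- that an inner sum over the internal orderings of each progression, with the interleaving fixed, telescopes the interior cross-group factors --- is left unproven, and it is also not how the cancellation works. The missing observation is that since $z_{i+1}=qz_i$ inside a subsequence, the factor $1-q\tfrac{z_i}{z_{i+1}}$ vanishes, so every summand whose $\sigma$ has a drop of size $\geq 2$ within a subsequence is identically zero; for the surviving permutations (condition \eqref{eq:sigma condition}) the sets $\{k:\sigma(i)>\sigma(j_m+k)\}$ and $\{k:\sigma(i)<\sigma(j_m+k)\}$ are contiguous, so for each \emph{fixed} $\sigma$, each $i$ and each group $m$ the product of ratios $P(\cdot)/P(q\,\cdot)$ telescopes to a single ratio, leaving at most one denominator factor per pair $(i,m)$, which one then identifies with a factor of the claimed product by pinning down $k_{\min}$ or $k_{\max}$. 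This per-summand telescoping requires no resummation and it also settles, for free, the multiplicity worry you raise about pairs such as $(a,b)$ and $(qa,qb)$. As written, your proposal rests on a false classification of the surviving poles and leaves its key cancellation unestablished, so it does not prove the proposition.
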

\begin{proof}
First write the definition of $f$ as follows:
\[
f = \prod_i \frac{P(1)}{P(z_i)} \sum_{\sigma\in S_n} \prod_{\sigma(i)>\sigma(j)}\frac{P(\frac{z_i}{z_j})}{(1-\frac{z_i}{z_j})P(q\frac{z_i}{z_j})} 
\prod_{\sigma(i)> \sigma(j)+1} (1-q \tfrac{z_i}{z_j}) \prod_{\sigma(i)\geq 2} (1-z_i).
\]
Note that $1-\frac{z_i}{z_j}$ does not contribute to the denominator because of symmetrization. Next note that if $j=i+1$ and $j,i$ belong to the same subsequence, then $1-q\frac{z_i}{z_j}=0$. So all summands with $\sigma(i)>\sigma(j)+1$ vanish. So it is enough to sum only over those $\sigma$ which satisfy the condition
\begin{equation}\label{eq:sigma condition}
\sigma(i+1)\geq \sigma(i)-1 \quad \text{whenever $i, i+1$ are in the same subsequence.}
\end{equation}
So in each sequence $\sigma(j_m),\ldots,\sigma(j_m+r_m-1)$ if there is a drop, the size of the drop is $1$. Now for each such $\sigma$ we look at the product
\[
\prod_{\sigma(i)>\sigma(j)}\frac{P(\frac{z_i}{z_j})}{P(q\frac{z_i}{z_j})} = \prod_{i<j,\;\sigma(i)>\sigma(j)}\frac{P(\frac{z_i}{z_j})}{P(q\frac{z_i}{z_j})} \prod_{i<j,\;\sigma(i)<\sigma(j)}\frac{P(\frac{z_j}{z_i})}{P(q\frac{z_j}{z_i})}.
\]
It is enough to show that for each value of $i$ and each $\sigma$ the following expressions are Laurent polynomials:
\[
P(1) \prod_{m:\, j_m> i} P(q \tfrac{z_i}{z_{j_m}})\times \prod_{i<j,\;\sigma(i)>\sigma(j)}\frac{P(\frac{z_i}{z_j})}{P(q\frac{z_i}{z_j})},
\]
\[
\prod_{m:\, j_m+r_m> i} P(q^{r_m} \tfrac{z_{j_m}}{z_i}) \times \prod_{i<j,\;\sigma(i)<\sigma(j)}\frac{P(\frac{z_j}{z_i})}{P(q\frac{z_j}{z_i})}.
\]
Further, let us split the product over all $j>i$ into products over our subsequences. We only need to consider values of $m$ such that $j_m>i$ (when $j$ and $i$ are in different subsequences) or $j_m\leq i<j_m+r_m$ (when they are in the same subsequence). So it is enough to show that the following products are Laurent polynomials:
\begin{equation}\label{eq:case1}
P(q \tfrac{z_i}{z_{j_m}}) \prod_{k<r_m,\;\sigma(i)>\sigma(j_m+k)} \frac{P(\frac{z_i}{z_{j_m+k}})}{P(q\frac{z_i}{z_{j_m+k}})} \qquad(j_m>i),
\end{equation}
\begin{equation}\label{eq:case2}
P(q^{r_m} \tfrac{z_{j_m}}{z_i}) \prod_{k<r_m,\;\sigma(i)<\sigma(j_m+k)} \frac{P(\frac{z_{j_m+k}}{z_i})}{P(q\frac{z_{j_m+k}}{z_i})} \qquad(j_m>i),
\end{equation}
\begin{equation}\label{eq:case3}
P(1) \prod_{i-j_m<k<r_m,\;\sigma(i)>\sigma(j_m+k)} \frac{P(\frac{z_i}{z_{j_m+k}})}{P(q\frac{z_i}{z_{j_m+k}})} \qquad(j_m>i),
\end{equation}
\begin{equation}\label{eq:case4}
P(q^{r_m} \tfrac{z_{j_m}}{z_i}) \prod_{i-j_m<k<r_m,\;\sigma(i)<\sigma(j_m+k)} \frac{P(\frac{z_{j_m+k}}{z_i})}{P(q\frac{z_{j_m+k}}{z_i})} \qquad(j_m>i).
\end{equation}
Observe that because of the condition \eqref{eq:sigma condition} in each of the cases \eqref{eq:case1}---\eqref{eq:case4} the values of $k$ from a contiguous set $k_{min},\ldots,k_{max}$ (if non-empty). So the arguments to $P$ from a geometric progression with ratio $q$ or $q^{-1}$. Hence the product collapses and the only remaining denominator is $P(q\frac{z_i}{z_{j_m+k_{min}}})$ in cases \eqref{eq:case1} and \eqref{eq:case3}, and $P(q\frac{z_i}{z_{j_m+k_{max}}})$ in cases \eqref{eq:case2} and \eqref{eq:case4}. Further analysis leads to $k_{min}=0$ in \eqref{eq:case1}, $k_{max}=r_m-1$ in \eqref{eq:case2}, $k_{min}=i-j_m+1$ in \eqref{eq:case3} and $k_{max}=r_m-1$ in \eqref{eq:case4}.
\end{proof}
\begin{example}
In the situation of $n=1$ we obtain that $f P(z_1) P(q)$ is a Laurent polynomial. For $n=2$ and $z_2=q z_1$ we obtain $f P(z_1) P(z_2) P(q^2) P(q)$ is a Laurent polynomial. Comparing with Example \ref{ex:f small n} one can notice that our denominator bound is not optimal.
\end{example}

For the case when $z_i=z_i(\mu)=z^{\mu_i} q^{i-l(\mu)}$ for a partition $\mu$ we obtain
\begin{prop}
The following product is a Laurent polynomial for any partition $\mu$:
\[
f_\mu \prod_{\square\in\mu} P(z^{a(\square)+1} q^{-l(\square)}) P(z^{-a(\square)} q^{l(\square)+1}).
\]
\end{prop}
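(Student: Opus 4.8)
The plan is to specialize the previous proposition to the sequence $z_i = z_i(\mu) = z^{\mu_i} q^{i-l(\mu)}$, choosing the subsequence decomposition to be the \emph{blocks of equal parts} of $\mu$. Concretely, I would group the indices $1,\dots,l(\mu)$ into the maximal runs on which $\mu_i$ is constant; if such a block has size $r_m$ and starts at index $j_m$, then $z_{j_m+k}(\mu)=z^{\mu_{j_m}}q^{j_m+k-l(\mu)}=q^k z_{j_m}(\mu)$ for $0\le k<r_m$, so each block is automatically a geometric progression with quotient $q$, which is exactly the hypothesis of the previous proposition. (This choice is forced, since $z_{i+1}(\mu)/z_i(\mu)=z^{\mu_{i+1}-\mu_i}q$ equals $q$ precisely when $\mu_i=\mu_{i+1}$.) Applying the proposition then shows that $f_\mu\cdot Q_1$ is a Laurent polynomial, where $Q_1$ is the product of $P$-factors obtained by substituting $z_i=z_i(\mu)$ into
\[
\prod_{i=1}^{l(\mu)} \Bigl( P(z_i)\prod_{m:\,j_m+r_m>i} P(q^{r_m}\tfrac{z_{j_m}}{z_i}) \prod_{m:\,j_m>i} P(q\tfrac{z_i}{z_{j_m}})\Bigr).
\]

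Writing $Q_2=\prod_{\square\in\mu} P(z^{a(\square)+1}q^{-l(\square)})P(z^{-a(\square)}q^{l(\square)+1})$ for the product in the statement, the goal reduces to proving that $Q_1$ divides $Q_2$ in the Laurent polynomial ring: once this is known, $f_\mu Q_2=(f_\mu Q_1)\cdot(Q_2/Q_1)$ is a product of two Laurent polynomials. I would reduce this divisibility to a purely combinatorial statement using that the $\alpha_k$ are formal variables: every factor on either side has the shape $P(\xi)$ with $\xi$ a monomial in $q,z$, and for distinct monomials $\xi\ne\xi'$ the polynomials $P(\xi),P(\xi')$ are coprime, since their irreducible factors $1-\alpha_k^{-1}\xi$ are pairwise non-associate. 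Hence $Q_1\mid Q_2$ holds if and only if the multiset of monomial arguments occurring in $Q_1$ is contained, with multiplicity, in the multiset occurring in $Q_2$.

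The core of the argument, and the main obstacle, is this combinatorial containment, which I would establish by exhibiting a multiplicity-preserving injection from the arguments of $Q_1$ to those of $Q_2$ after translating the block-indexed data into arm--leg coordinates. The ``diagonal'' factors are easy to place: $P(z_i)=P(z^{\mu_i}q^{i-l(\mu)})$ is the $z^{a+1}q^{-l}$-argument of the leftmost cell $(i,1)$, which has arm $\mu_i-1$ and leg $l(\mu)-i$; and for the block $m=s(i)$ containing $i$ the term $P(q^{r_m}z_{j_m}/z_i)=P(q^{\,j_m+r_m-i})$ is the $z^{-a}q^{l+1}$-argument of the rightmost cell of row $i$, whose arm is $0$ and whose leg is $j_m+r_m-1-i=\mu'_{\mu_i}-i$. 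The genuinely involved part is the off-diagonal terms coming from pairs of distinct blocks $m>s(i)$, namely $P(z^{\,v_m-\mu_i}q^{\,r_m+j_m-i})$ and $P(z^{\,\mu_i-v_m}q^{\,1+i-j_m})$ with $v_m=\mu_{j_m}$, which must be matched to arm--leg arguments of the remaining cells; I expect this bookkeeping to be the delicate step, and I would first fix the indexing conventions on the running examples $\mu=(2,1)$, where in fact $Q_1=Q_2$, and $\mu=(2,2)$, where $Q_2/Q_1=P(z^{-1}q^2)P(zq^{-1})P(z^{-1}q)P(z)$ is a nontrivial Laurent polynomial, before writing out the general injection.
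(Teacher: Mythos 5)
Your overall strategy is exactly the paper's: specialize the denominator-bound proposition to $z_i=z_i(\mu)$ with the subsequences taken to be the maximal runs of equal parts of $\mu$, and then show row by row that the resulting $P$-factors form a sub-multiset of the arm--leg factors. Your reduction of divisibility to multiset containment of the monomial arguments is sound (in the Laurent polynomial ring the units are monomials, and $1-\alpha_k^{-1}\xi$, $1-\alpha_{k'}^{-1}\xi'$ are non-associate irreducibles unless $k=k'$ and $\xi=\xi'$, since $\alpha_k\neq\alpha_{k'}^{-1}$ as monomials in the formal variables $\alpha_1,\dots,\alpha_g$), your placement of the diagonal factors is correct, and your computations for $\mu=(2,1)$ and $\mu=(2,2)$ check out against the general formulas. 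However, the proof is not complete: the matching of the off-diagonal factors $P(z^{v_m-\mu_i}q^{r_m+j_m-i})$ and $P(z^{\mu_i-v_m}q^{1+i-j_m})$ to cells of $\mu$ --- which is the entire content of the proposition --- is only announced (``before writing out the general injection''), never carried out. As submitted this is a genuine gap, since everything preceding it is routine setup.

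The gap closes in two lines, and the anticipated ``delicate bookkeeping'' does not materialize; this is precisely what the paper's proof does. Fix a row $i$ and a block $m$ with $v_m=\mu_{j_m}$. If $j_m+r_m>i$, take the cell $\square$ in row $i$, column $v_m$: since the rows of length $\geq v_m$ are exactly $1,\dots,j_m+r_m-1$ (parts are weakly decreasing and the run is maximal), one has $a(\square)=\mu_i-v_m$ and $l(\square)=j_m+r_m-1-i$, whence
\[
z^{-a(\square)}q^{l(\square)+1} \;=\; z^{v_m-\mu_i}q^{j_m+r_m-i} \;=\; q^{r_m}\,\tfrac{z_{j_m}(\mu)}{z_i(\mu)},
\]
which uniformly covers both your ``rightmost cell'' case $m=s(i)$ and the case $m>s(i)$. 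If $j_m>i$, maximality of the runs forces $v_m<\mu_i$ strictly, so the cell $\square$ in row $i$, column $v_m+1$ lies in $\mu$; the rows of length $\geq v_m+1$ are exactly $1,\dots,j_m-1$, so $a(\square)=\mu_i-v_m-1$, $l(\square)=j_m-1-i$, and
\[
z^{a(\square)+1}q^{-l(\square)} \;=\; z^{\mu_i-v_m}q^{\,1+i-j_m} \;=\; q\,\tfrac{z_i(\mu)}{z_{j_m}(\mu)}.
\]
Injectivity is automatic: distinct blocks have distinct values $v_m$, the first family uses only the $z^{-a}q^{l+1}$-factors of cells in pairwise distinct columns $v_m$, while the second family together with $P(z_i)$ (your cell $(i,1)$) uses only the $z^{a+1}q^{-l}$-factors in pairwise distinct columns $v_m+1\geq 2$ and $1$; distinct rows use disjoint sets of cells. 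One correction to a side remark: the block decomposition is not ``forced'' --- any refinement of the maximal runs still satisfies the hypothesis of the preceding proposition --- but maximality is genuinely needed here: with a refined decomposition a block with $j_m>i$ can have $v_m=\mu_i$, producing a factor $P(q^{1+i-j_m})$ whose argument ($z$-exponent $0$, $q$-exponent $\leq 0$) is not an arm--leg argument of row $i$, and the containment would fail even though the proposition itself remains true.
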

\begin{proof}
Recall that $f_\mu$ is a shorthand for $f(z_1(\mu),\ldots,z_{l(\mu)}(\mu))$ where $z_i(\mu) = z^{\mu_i} q^{i-l(\mu)}$.
It is enough to show that for each $i$ the product
\begin{equation}\label{eq:prod P}
P(z_i)\prod_{m:\, j_m+r_m> i} P(q^{r_m} \tfrac{z_{j_m}}{z_i}) \prod_{m:\, j_m> i} P(q \tfrac{z_i}{z_{j_m}})
\end{equation}
divides the corresponding arm-leg product over the cells of $\mu$ in the row $i$. Note that the our subsequences of geometric progressions in $z_i$ simply correspond to repeated parts of $\mu$. Let $\square$ be the cell in row $i$ and column $\mu_{j_m}$ ($j_m+r_m>i$). Then we have $a(\square)=\mu_i-\mu_{j_m}$, $l(\square)=j_m+r_m-1-i$. Therefore
\[
z^{-a(\square)} q^{l(\square)+1} = q^{r_m} \tfrac{z_{j_m}}{z_i}.
\]
Let $\square$ be the cell in row $i$ and column $\mu_{j_m}+1$ ($j_m>i$). Then $a(\square)=\mu_i-\mu_{j_m}-1$, $l(\square)=j_m-1-i$. Therefore
\[
z^{a(\square)+1} q^{-l(\square)} = q \tfrac{z_i}{z_{j_m}}.
\]
For the cell in column $1$ we have $a(\square)=\mu_i-1$, $l(\square)=l(\mu)-i$, so 
\[
z^{a(\square)+1} q^{-l(\square)} = z_i.
\]
Thus the factors of \eqref{eq:prod P} form a sub-multiset of the factors of the arm-leg product, and the claim follows.
\end{proof}
\begin{cor}\label{cor:laurent}
For any partition $\mu$ the product $N_\mu(1)\Omega_\mu$ is a Laurent polynomial.
\end{cor}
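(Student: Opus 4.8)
The plan is to read the statement off directly from the formula
\[
\Omega_\mu = \frac{f_\mu \prod_{i=1}^g N_\mu(\alpha_i^{-1})}{N_\mu(1)}
\]
established above, which gives $N_\mu(1)\Omega_\mu = f_\mu \prod_{i=1}^g N_\mu(\alpha_i^{-1})$. Everything then reduces to showing that the rational function $f_\mu$, whose possible denominators were bounded in the preceding proposition, has all of these denominators cancelled by the factors of $\prod_{i=1}^g N_\mu(\alpha_i^{-1})$. The preceding proposition tells us precisely that $f_\mu \prod_{\square\in\mu} P(z^{a(\square)+1} q^{-l(\square)}) P(z^{-a(\square)} q^{l(\square)+1})$ is a Laurent polynomial, so it suffices to identify $\prod_{i=1}^g N_\mu(\alpha_i^{-1})$ with this very product, up to multiplication by a Laurent monomial (a unit in the Laurent polynomial ring).

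The key step is therefore a factor-by-factor comparison. Recalling $N_\mu(u) = \prod_{\square\in\mu}(z^{a(\square)}-u q^{1+l(\square)})(z^{a(\square)+1}-u^{-1} q^{l(\square)})$ and $P(x) = \prod_{i=1}^g (1-\alpha_i^{-1} x)$, I would pull a monomial out of each linear factor, cell by cell, obtaining
\[
\prod_{i=1}^g (z^{a(\square)}-\alpha_i^{-1} q^{1+l(\square)}) = z^{g\, a(\square)} P(z^{-a(\square)} q^{l(\square)+1}),
\]
\[
\prod_{i=1}^g (z^{a(\square)+1}-\alpha_i q^{l(\square)}) = (-1)^g q^{g\, l(\square)} \Big(\prod_{i=1}^g \alpha_i\Big) P(z^{a(\square)+1} q^{-l(\square)}).
\]
Taking the product over $\square\in\mu$ shows that $\prod_{i=1}^g N_\mu(\alpha_i^{-1})$ equals the denominator-bounding product of the preceding proposition times a single Laurent monomial in $z$, $q$ and $\alpha_1,\ldots,\alpha_g$. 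Combining the two observations, $N_\mu(1)\Omega_\mu$ equals a unit times the Laurent polynomial furnished by the preceding proposition, hence is itself a Laurent polynomial.

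Since the real work lives in the preceding proposition, this corollary is a short deduction and the only thing to watch is the bookkeeping of the second paragraph: under the substitution $u=\alpha_i^{-1}$ one must check that each of the two arm--leg factors of $N_\mu$ attached to a cell $\square$ becomes exactly one of the two $P$-factors attached to that same cell, matching arms with legs correctly. Because both arm--leg factors of $N_\mu(\alpha_i^{-1})$ and both $P$-factors per cell are indexed by the same data $(a(\square),l(\square))$, this matching is exact and no factor is left over, which is what upgrades the divisibility bound into a complete cancellation. I expect this index-matching to be the only possible source of error, and it is purely a matter of comparing exponents.
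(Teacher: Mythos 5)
Your proof is correct and follows essentially the same route as the paper's: the paper likewise writes $N_\mu(1)\Omega_\mu = f_\mu\prod_{i=1}^g N_\mu(\alpha_i^{-1})$ and observes that $\prod_{i=1}^g N_\mu(\alpha_i^{-1})$ equals the denominator-bounding product $\prod_{\square\in\mu} P(z^{a(\square)+1} q^{-l(\square)}) P(z^{-a(\square)} q^{l(\square)+1})$ up to sign and a Laurent monomial. Your explicit factor-by-factor verification of that monomial identity is accurate and merely spells out what the paper asserts in one line.
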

\begin{proof}
We have 
\[
N_\mu(1)\Omega_\mu = \prod_{i=1}^g N_\mu(\alpha_i^{-1}) f_\mu
\]
and
\[
\prod_{i=1}^g N_\mu(\alpha_i^{-1}) = \prod_{\square\in\mu} P(z^{a(\square)+1} q^{-l(\square)}) P(z^{-a(\square)} q^{l(\square)+1}) \times \pm\;\text{a monomial}.
\]
\end{proof}

\subsection{Interpolation}
Another nice property of the function $f$ is that substitution $z_n=1$ leads to essentially the same function in $n-1$ variables:
\begin{prop}\label{prop:regularity}
For any $n$ we have
\[
f(1,z_1,\ldots,z_n) = f(q z_1, \ldots, q z_n).
\]
\end{prop}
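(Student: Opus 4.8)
The plan is to reduce the identity to a term-by-term comparison after rewriting $f$ in terms of linear orderings of its variables and substituting $z_0=1$. I would start from the reformulation of $f$ already used at the beginning of the denominator-bound proof, namely $f=\prod_a \frac{P(1)}{P(z_a)}\sum_\sigma(\cdots)$ with $P(x)=\prod_{k=1}^g(1-\alpha_k^{-1}x)$, and record it in terms of a linear ordering $(a_1,\dots,a_N)$ of the variable indices (a permutation $\sigma$ being encoded by the sequence with $\sigma(a_k)=k$). In these terms the summand attached to $(a_1,\dots,a_N)$ is
\[
\prod_{k>l}\frac{P(z_{a_k}/z_{a_l})}{(1-z_{a_k}/z_{a_l})\,P(q z_{a_k}/z_{a_l})}\ \prod_{k>l+1}\Bigl(1-q\tfrac{z_{a_k}}{z_{a_l}}\Bigr)\ \prod_{k\geq 2}(1-z_{a_k}).
\]
I would then read the left-hand side as $f$ in the $n+1$ variables $z_0,z_1,\dots,z_n$ with $z_0=1$, so that the sum runs over orderings of $\{0,1,\dots,n\}$ and the prefactor contributes trivially at the $0$-slot since $P(1)/P(z_0)=1$.

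The crucial observation is that only orderings with $a_1=0$ survive the substitution. Indeed, if $0$ sits in position $p\geq 2$, then the last product $\prod_{k\geq 2}(1-z_{a_k})$ contains the explicit factor $(1-z_{a_p})=(1-z_0)$, which vanishes at $z_0=1$, so that summand is killed. For an ordering with $a_1=0$, writing $(b_1,\dots,b_n)$ for the induced ordering of $\{1,\dots,n\}$, the factors attached to the first slot simplify (using $z_0=1$) to $\prod_m \frac{P(z_{b_m})}{(1-z_{b_m})P(q z_{b_m})}$ from the first product together with $\prod_{m\geq 2}(1-q z_{b_m})$ from the second, and the poles $\prod_m \tfrac{1}{1-z_{b_m}}$ cancel exactly against the surviving $\prod_m (1-z_{b_m})$ of the last product.

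What remains is to match this against $f(qz_1,\dots,qz_n)$. All the factors depending only on ratios $z_{b_m}/z_{b_l}$ (the $P$-ratios, the $1/(1-z_{b_m}/z_{b_l})$, and the $(1-q z_{b_m}/z_{b_l})$) are invariant under $z\mapsto qz$, and $f(qz_1,\dots,qz_n)$ contributes its own $\prod_{m\geq 2}(1-qz_{b_m})$ from the last product; so the ``common part'' of each summand agrees on the two sides. The only discrepancies are the scalar $\prod_m \frac{P(z_{b_m})}{P(q z_{b_m})}$ produced above together with the prefactors $\prod_i \frac{P(1)}{P(z_i)}$ on the left versus $\prod_i \frac{P(1)}{P(q z_i)}$ on the right; these cancel because $\{b_1,\dots,b_n\}=\{1,\dots,n\}$ forces $\prod_i \frac{P(1)}{P(z_i)}\prod_i\frac{P(z_i)}{P(qz_i)}=\prod_i\frac{P(1)}{P(qz_i)}$. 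Thus the two sums agree orderingwise, giving the claim (the case $n=1$ being a one-line check: $f(1,z_1)=\frac{P(1)}{P(z_1)}\cdot\frac{P(z_1)}{P(qz_1)}=f(qz_1)$).

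The step I expect to require the most care is the vanishing claim: I must justify discarding every ordering in which $0$ is not first, i.e. that the explicit zero $(1-z_0)$ is not compensated by a pole of the remaining factors. The only candidate denominators are $1-z_{a_k}/z_{a_l}$ and $P(q z_{a_k}/z_{a_l})$, whose pole loci $\{z_a=z_b\}$ and $\{q z_a=\alpha_m z_b\}$ do not contain the hyperplane $\{z_0=1\}$ for generic $\alpha_k$; hence a generic point of $\{z_0=1\}$ avoids them, term-by-term substitution is legitimate, and the identity of rational functions follows by genericity. The remaining care is purely bookkeeping of the index shift induced by the condition $k>l+1$ in the second product, which one checks produces exactly the factors $\prod_{m\geq 2}(1-qz_{b_m})$ claimed above and nothing more.
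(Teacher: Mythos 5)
Your proof is correct and takes essentially the same route as the paper's: the factor $\prod_{k\geq 2}(1-z_{a_k})$ kills every summand in which the variable set to $1$ is not first, and after cancelling the $(1-z_{b_m})$ factors the surviving sum matches $f(qz_1,\ldots,qz_n)$ term by term once the symmetric factor $\prod_i P(z_i)/P(qz_i)$ is absorbed into the prefactor. Your genericity argument that the zero $(1-z_0)$ cannot be compensated by a pole is a harmless elaboration of a step the paper leaves implicit.
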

\begin{proof}
Note that because of the product $\prod_{i=2}^n (1-z_i)$ in the definition of $f(1, z_1, \ldots, z_n)$ only the terms with $\sigma(1)=1$ survive. So we can reduce the summation over $S_{n+1}$ to a summation over $S_n$. After cancellation of $\prod_i (1-z_i)$ we obtain
\begin{multline*}
f(1, z_1,\ldots,z_n) = \prod_i \frac{P(1)}{P(z_i)}
\\
\times \sum_{\sigma\in S_n}\sigma \left\{
\prod_{i>j}\frac{P(\frac{z_i}{z_j})}{(1-\frac{z_i}{z_j}) P(q \frac{z_i}{z_j})}  \prod_{i> j+1} (1-q \tfrac{z_i}{z_j}) \prod_{i>1} (1-q z_i) \prod_{i}\frac{P(z_i)}{P(q z_i)} 
 \right\},
\end{multline*}
which coincides with $f(q z_1, \cdots, q z_n)$.
\end{proof}

\begin{cor}
Let $\mu$ be a partition and let $n\geq l(\mu)$. Define $z_{n,i}(\mu)=z^{\mu_i} q^{i-n}$ for $i=1,\ldots,n$. Then
\[
f(z_{n,1}(\mu), \ldots, z_{n,n}(\mu)) = f(z_1(\mu), \ldots, z_{l(\mu)}(\mu)).
\]
\end{cor}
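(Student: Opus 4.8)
The plan is to reduce the number of arguments of $f$ one variable at a time by repeatedly invoking Proposition \ref{prop:regularity}, using that $f$ is a symmetric function of its variables. First I would record this symmetry: in the defining formula \eqref{eq:f definition} the prefactor $\prod_i \prod_{k=1}^g \frac{1-\alpha_k^{-1}}{1-\alpha_k^{-1} z_i}$ is manifestly symmetric in the $z_i$, and the remaining factor is a full symmetrization $\sum_{\sigma\in S_n} \sigma\{\cdots\}$, which is symmetric because left multiplication by any $\tau\in S_n$ merely permutes the summands. Hence $f(z_1,\ldots,z_n)$ is invariant under arbitrary permutations of its arguments.

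Write $m = l(\mu)$ and fix $n \geq m$. The two arithmetic facts driving the induction are: for $n > m$ the part $\mu_n$ vanishes, so $z_{n,n}(\mu) = z^{\mu_n} q^{n-n} = 1$; and multiplying any argument by $q$ shifts the exponent of $q$, namely $q\, z_{n,i}(\mu) = z^{\mu_i} q^{i-(n-1)} = z_{n-1,i}(\mu)$. With these in hand the step from $n$ to $n-1$ is immediate. Using symmetry I would first move the argument equal to $1$ into the leading slot,
\[
f(z_{n,1}(\mu),\ldots,z_{n,n}(\mu)) = f(1, z_{n,1}(\mu),\ldots,z_{n,n-1}(\mu)),
\]
then apply Proposition \ref{prop:regularity} to strip off the leading $1$ and rescale, and finally use $q\, z_{n,i}(\mu) = z_{n-1,i}(\mu)$ to identify the outcome:
\[
f(1, z_{n,1}(\mu),\ldots,z_{n,n-1}(\mu)) = f(q z_{n,1}(\mu),\ldots,q z_{n,n-1}(\mu)) = f(z_{n-1,1}(\mu),\ldots,z_{n-1,n-1}(\mu)).
\]

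A descending induction on $n$, from the given value down to $m = l(\mu)$, then yields the claim, the base case $n = m$ being the tautology $z_{m,i}(\mu) = z^{\mu_i} q^{i-m} = z_i(\mu)$. I do not expect a genuine obstacle: once the symmetry of $f$ and the identity $q\, z_{n,i}(\mu) = z_{n-1,i}(\mu)$ are in place, the argument is pure bookkeeping. The only point deserving care is checking that the eliminated argument really equals $1$ at each stage, which is precisely where the hypothesis $n \geq l(\mu)$ enters: it forces $\mu_n = 0$ and hence $z_{n,n}(\mu) = 1$, so that Proposition \ref{prop:regularity} is applicable; without it the last variable $z_{n,n}(\mu) = z^{\mu_n}$ need not be $1$ and the reduction would fail.
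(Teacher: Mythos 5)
Your proof is correct and is exactly the argument the paper intends: the corollary is stated without proof as an immediate consequence of Proposition \ref{prop:regularity}, obtained by iterating it as you do, and your two supporting observations --- that $f$ is symmetric (since the symmetrization over $S_n$ makes the whole expression invariant under permutations) and that $qz_{n,i}(\mu)=z_{n-1,i}(\mu)$ with $z_{n,n}(\mu)=1$ precisely because $n>l(\mu)$ forces $\mu_n=0$ --- are the right details to check. Nothing to add.
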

Thus, instead of having a separate function for each value of $l(\mu)$ we can use the same function of $n$ arguments if $n$ is big enough.

\section{Polynomiality and the main result}
In this section we return to variables $q,t$ which correspond to Schiffmann's variables $z, q$ respectively. First we prove the following statement. The proof is straightforward using methods of \cite{mellit2016integrality}, but tedious. Let $R$ be a lambda ring containing $\QQ(t)[q^{\pm 1}]$. We denote by $R^*$ the tensor product $R\otimes_{\QQ(t)[q^{\pm 1}]} \QQ(q,t)$ and assume $R\subset R^*$.
\begin{defn}
A regular function of $z_i$ is a sequence of Laurent polynomials 
\[
f_n\in R[z_1^{\pm 1},\ldots,z_n^{\pm 1}] \qquad (n\geq 0)
\]
such that 
\begin{enumerate}
\item $f_n$ is symmetric in $z_1, \ldots, z_n$,
\item $f_{n+1}(1,z_1,\ldots,z_n)=f_n(t z_1, \ldots, t z_n)$.
\end{enumerate}
\end{defn}
For a regular function $f$ and a partition $\mu$ we set
\[
f_\mu = f_{l(\mu)}(z_1(\mu), \ldots, z_{l(\mu)}(\mu)),\qquad z_i(\mu) = q^{\mu_i} t^{i-l(\mu)}.
\]
\begin{lem}\label{lem:deformation}
Let $f(u)=1+f^{(1)} u + f^{(2)} u^2+\cdots$ be a power series whose coefficients $f^{(i)}$ are regular functions in the above sense. Let 
\[
\Omega[X] = \sum_{\mu\in \cP} c_\mu \tilde{H}_\mu[X;q,t]
\]
be a series with $c_\mu\in R^*$, $c_{\varnothing}=1$ such that all coefficients of
\[
\HH[X] = (q-1) \pLog \Omega[X]
\]
are in $R$. Let
\[
\Omega_f[X,u] = \sum_{\mu} c_\mu \tilde{H}_\mu[X;q,t] f_\mu(u),\qquad \HH_f[X,u] = (q-1) \pLog \Omega_f[X,u].
\]
Consider the expansion
\[
\HH_f[X,u]= \HH[X] + u \HH_{f,1}[X] + u^2 \HH_{f,2}[X] + \cdots.
\]
Then all coefficients of $\HH_{f,i}[X]$ for $i\geq 1$ are in $(q-1) R$. In other words, the specialization $q=1$ of $\HH_f[X,u]$ is independent of $u$.
\end{lem}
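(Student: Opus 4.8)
The plan is to isolate the single place where the structure of $f$ matters—its behaviour under the specialization $q=1$—and then feed the result into the plethystic machinery of \cite{mellit2016integrality}.

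The crucial observation is that \emph{$f_\mu(u)$ becomes independent of $\mu$ at $q=1$}. Indeed $z_i(\mu)=q^{\mu_i}t^{i-l(\mu)}$ specializes at $q=1$ to $t^{i-l(\mu)}$, which no longer sees the individual parts $\mu_i$ but only $l(\mu)$. Writing $l=l(\mu)$, each coefficient regular function $f^{(k)}$ therefore gives
\[
f^{(k)}_\mu\big|_{q=1}=f^{(k)}_l\bigl(t^{1-l},t^{2-l},\ldots,t^{-1},1\bigr).
\]
Here I would use symmetry (property (i)) to move the argument $1$ to the front, and then the interpolation property (property (ii)) to strip it off, obtaining $f^{(k)}_{l-1}(t^{2-l},\ldots,t^{-1},1)$, which is the same expression with $l$ replaced by $l-1$. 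Iterating down to $l=0$ yields $f^{(k)}_\mu|_{q=1}=f^{(k)}_0\in R$, independent of $\mu$. Assembling the $u$-series, $f_\mu(u)|_{q=1}=f_0(u)=:\phi(u)\in 1+uR[[u]]$ for every $\mu$. This is the only genuinely new input; the rest is bookkeeping.

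Next I would reduce to the case $\phi=1$. The quotient $\tilde f=f/\phi$ is again a power series of regular functions: properties (i)--(ii) survive division by the scalar $\phi=f_0(u)$, which does not depend on the arity and is invertible in $R[[u]]$, and $\tilde f_0=1$. Since $\Omega_f=\phi\,\Omega_{\tilde f}$ and $\pExp$ turns sums into products, $\pLog$ turns products into sums, so $\HH_f=(q-1)\pLog\phi+\HH_{\tilde f}$. The term $(q-1)\pLog\phi$ lies in $(q-1)R[[u]]$ because $\phi\in R[[u]]$; it vanishes at $q=1$ and affects no $\HH_{f,i}$ modulo $(q-1)$. Hence it is enough to prove the lemma for $\tilde f$, that is, to assume $f_\mu(u)\equiv 1\pmod{q-1}$ for all $\mu$. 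In that case the observation above gives $\Omega_f-\Omega=\sum_\mu c_\mu\tilde H_\mu\,(f_\mu(u)-1)$ with every $f_\mu(u)-1$ divisible by $q-1$ (and the $\mu=\varnothing$ term zero), so that $\Omega_f=\Omega\cdot\bigl(1+(q-1)W\bigr)$ for some $W$ with coefficients in $R^*[[u]]$ and vanishing constant term in $X$, and
\[
\HH_f-\HH=(q-1)\,\pLog\bigl(1+(q-1)W\bigr).
\]

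To analyse this correction I would invoke the standard expression of $\pLog$ through the Adams operations $\psi_m$, together with $\psi_m(q-1)=q^m-1$. Each term of $\pLog(1+(q-1)W)$ then acquires the factor $q^m-1$, which is divisible by $q-1$; after multiplying by the outer $(q-1)$ one expects the correction to vanish at $q=1$. The main obstacle is that the coefficients $c_\mu\in R^*$ carry poles at $q=1$, so $W$ and the $\psi_m(W)$ are merely rational—not regular—there, and one must check that the pole orders are small enough that each contribution really tends to $0$ as $q\to 1$. This is exactly the point at which the technology of \cite{mellit2016integrality} enters: it certifies that $\HH_f$, and hence the correction $(q-1)\pLog(1+(q-1)W)$, already lies in $R[[u]]$—so it is rational and regular at $q=1$—and it supplies the pole-order control that forces its value at $q=1$ to be $0$. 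Granting this (admittedly tedious) input, every $\HH_{f,i}$ with $i\geq 1$ lies in $(q-1)R$, which is the assertion.
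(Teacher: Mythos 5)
Your opening observation is correct and is genuinely the right heuristic: since $z_i(\mu)=q^{\mu_i}t^{i-l(\mu)}$, reduction mod $(q-1)$ kills the parts $\mu_i$, and symmetry plus the interpolation property $f_{n+1}(1,z_1,\ldots,z_n)=f_n(tz_1,\ldots,tz_n)$ lets you strip the argument list $t^{1-l},\ldots,t^{-1},1$ down to the arity-zero constant, so $f_\mu(u)\equiv\phi(u)\pmod{(q-1)}$ independently of $\mu$; the reduction to $\phi=1$ by dividing out this scalar series is also fine. But the proof then collapses exactly where you flag it. Writing $\Omega_f=\Omega\cdot(1+(q-1)W)$ extracts a factor $(q-1)$ that is illusory: the coefficients $c_\mu\in R^*$ may have poles at $q=1$ of unbounded order, so $(q-1)W$, the terms $\psi_m((q-1)W)=(q^m-1)\psi_m(W)$ (note $\psi_m$ sends a pole at $q=1$ to a pole at $q=1$), and hence $(q-1)\pLog(1+(q-1)W)$ need not be regular at $q=1$, let alone vanish there. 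At this point you ``grant'' two things to the technology of \cite{mellit2016integrality}: that $\HH_f$ has coefficients in $R$, and that there is pole-order control forcing the correction to vanish at $q=1$. Neither is a quotable statement of that paper --- its results (admissibility of the four-variable Macdonald kernel, preservation of admissibility by $\int^S$, $\nabla$, $\tau$, $\tau^*$) apply only to expressions of a very particular shape, not to an arbitrary series of the form $1+(q-1)W$. Since those two granted facts are jointly equivalent to the conclusion of the lemma, the final step of your argument is circular.

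For comparison, here is how the paper manufactures both facts at once, and it is an actual construction rather than an appeal to general pole control: it lifts the $f^{(i)}$ to the free lambda ring $R[Z,Z^*]$, expresses $f_\mu=\int^S_{Z,V}\pExp[g'(u)[Z,V]]\,\pExp[Z^*B_\mu+V^*B_\mu^*]$, proves admissibility of $\tilde\Omega[X,Z,V]=\sum_\mu c_\mu\tilde H_\mu[X]\pExp[ZB_\mu+VB_\mu^*]$ by applying Tesler's identity and the operators $\nabla,\tau,\tau^*,\omega$ to the admissible four-variable kernel, and --- this is the step your sketch has no substitute for --- inserts a free lambda-ring variable $W$ so that $\pExp[\tfrac{W}{S}g'(u)[Z,V]]$ is admissible. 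Admissibility of the resulting $\Omega_f[X,W,u]$ yields $\HH_{f,i}[X,W]\in R[X,W]$ (your first granted fact), and then comparing the specializations $W=S\equiv 0\pmod{(q-1)}$ and $W=0$, where the integral collapses to $\Omega[X]$, yields the congruence (your second granted fact). If you want to complete your outline, the $W$-trick is the missing mechanism: modulo $(q-1)$ it is precisely what makes the dependence on $f$ disappear, i.e.\ it is the rigorous incarnation of your observation that $f_\mu|_{q=1}$ does not depend on $\mu$.
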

\begin{proof}
Let $S=-(q-1)(t-1)$. Recall the notation $\int^S_X F[X,X^*]$ (see \cite{mellit2016integrality}). This is a linear operation such that
\[
\int^S_X G[X] F[X^*] = (G[X], F[X])_X^S =(G[X], F[SX])_X, 
\]
and $(-,-)_X$ is the standard Hall scalar product,
\[
(s_\mu[X], s_{\lambda}[X])_X = \delta_{\mu,\lambda}.
\]
Recall that modified Macdonald polynomials are orthogonal with respect to $(-,-)_X^S$. In this proof we call an expression $F$ admissible if $(q-1)\pLog F$ has all of its coefficients in $R$. It was proved in \cite{mellit2016integrality} that $\int^S_X$ preserves admissibility. By the assumption $\Omega[X]$ is admissible. We will ``construct'' $\HH_f[X,u]$ from admissible building parts.

Let $R[Z,Z^*]$ be the free lambda ring over $R$ with two generators $Z$ and $Z^*$. Fix a large integer $N$. For each $i\geq 1$ let $\tilde{f}^{(i)}\in R[Z,Z^*]$ be any element such that 
\[
\tilde{f}^{(i)}\left[\sum_{i=1}^N z_i, \sum_{i=1}^N z_i^{-1}\right] = f^{(i)}_N(z_1,\ldots,z_N).
\]
One way to construct such an element is to find $m\geq 0$ such that $(z_1\cdots z_N)^m f^{(i)}_N(z_1,\ldots,z_N)=p(z_1,\ldots,z_n)$ does not contain negative powers of $z_i$, then lift $p$ to a symmetric function $\tilde{p}\in R[Z]$ and set
\[
\tilde{f}^{(i)}[Z,Z^*] = \tilde p[Z] e_N[Z^*]^m.
\]
Then set
\[
\tilde{f}(u) = 1 + \tilde{f}^{(1)}u + \tilde{f}^{(2)} u^2 +\cdots\in R[Z,Z^*][[u]].
\]
We can take plethystic logarithm:
\[
\pLog \tilde{f}(u) = g(u)= g^{(1)} u + g^{(2)} u^2 + \cdots \in u R[Z,Z^*][[u]].
\]
For any partition $\mu$ satisfying $l(\mu)\leq N$ by regularity of $f$ we have
\[
f_\mu = f_{l(\mu)}(q^{\mu_1} t^{1-l(\mu)}, q^{\mu_2} t^{2-l(\mu)}, \ldots, q^{\mu_{l(\mu)}}) = f_N(q^{\mu_1} t^{1-N}, q^{\mu_2} t^{2-N}, \ldots, q^{\mu_N}).
\]
Thus we can obtain $f_\mu$ from $\tilde f$ by specializing at
\[
Z = Z_\mu=\sum_{i=1}^N q^{\mu_i} t^{i-N} = t^{1-N} (q-1) B_\mu + \sum_{i=1}^N t^{i-N} = \frac{t^{1-N}}{1-t} S B_\mu + \frac{t^{-N}-1}{t^{-1}-1},
\]
and similarly for $Z^*$. Hence there exists a series
\[
g'(u)\in u R[Z,Z^*][[u]]
\]
such that for any partition $\mu$ with $l(\mu)\leq N$ we have
\[
f_\mu = \pExp[g'(u)[S B_\mu, S B_\mu^*]].
\]
Specialization can be replaced by scalar product using the identity
\[
F[S Y] = (F[X], \pExp[SXY])_X = (F[X], \pExp[XY])_X^*,
\]
and we obtain
\begin{equation}\label{eq:fmu integral}
f_\mu = \int^S_{Z,V} \pExp[g'(u)[Z, V]] \pExp[Z^* B_\mu + V^* B_\mu^*]\qquad (l(\mu)\leq N).
\end{equation}

Let us show that the sum
\begin{equation}\label{eq:omega4B}
\tilde{\Omega}[X,Z,V]=\sum_{\mu\in\cP} c_\mu \tilde{H}_\mu[X;q,t] \pExp[Z B_\mu + V B_\mu^*]
\end{equation}
is admissible. Begin with the series
\begin{equation}\label{eq:omega4} 
\sum_{\mu\in\cP} \frac{\tilde{H}_\mu[X] \tilde{H}_\mu[Y] \tilde{H}_\mu[Z] \tilde{H}_\mu[V]}{(\tilde{H}_\mu, \tilde{H}_\mu)^S},
\end{equation}
which is admissible by the main theorem of \cite{mellit2016integrality}. Recall the nabla operator $\nabla$, the shift operator $\tau$ and the multiplication by $\pExp\left[\frac{X}{S}\right]$ operator $\tau^*$, and Tesler's identity
\[
\nabla \tau \tau^* \tilde{H}_\mu[X] = \pExp\left[\frac{D_\mu X}{S}\right],
\]
where $D_\mu = -1-S B_\mu$. This implies
\[
\tau^* \nabla \tau \tau^* \tilde{H}_\mu[X] = \pExp[-X B_\mu].
\]
All of the operators involved preserve admissibility (Corollary 6.3 from \cite{mellit2016integrality}. In particular, we see that the operator that sends $\tilde{H}_\mu[X]$ to $\pExp[X B_\mu]$ preserves admissibility. Let $\omega$ be the operator that sends $q,t,X$ to $q^{-1}, t^{-1}, -X$. Then using $\omega \nabla=\nabla^{-1}\omega$, $\omega \tilde{H}_\mu[X] = \frac{\tilde{H}_\mu[X]}{\tilde{H}_\mu[-1]}$ and the fact that $\nabla^{-1}$ preserves admissibility (Corollary 6.4 from \cite{mellit2016integrality}) we see that the operator that sends $\tilde{H}_\mu[X]$ to $\pExp[X B_\mu^*]$ preserves admissibility too. Applying these operators to \eqref{eq:omega4} in the variables $Z, V$ we obtain that the following series is admissible:
\[
\sum_{\mu\in\cP} \frac{\tilde{H}_\mu[X] \tilde{H}_\mu[Y] \pExp[Z B_\mu + V B_\mu^*]}{(\tilde{H}_\mu, \tilde{H}_\mu)^S}.
\]
Finally, pairing this series with $\Omega[X]$ we obtain admissibility of \eqref{eq:omega4B}.

Because of \eqref{eq:fmu integral} we have
\[
\Omega_f(u) =  \int^S_{Z,V} \pExp[g'(u)[Z, V]] \tilde{\Omega}[X, Z^*, V^*]\qquad \text{up to terms of degree $>N$ in $X$}.
\]
In what follows we ignore the terms of degree $>N$ in $X$. Since $N$ can be chosen as large as possible, this is enough. Notice that $\pExp[g'(u)[Z, V]]$ is ``more'' than admissible in the following sense. Introduce a new free (in the lambda ring sense) variable $W$. Then $\pExp[\tfrac{W}{S} g'(u)[Z, V]]$ is admissible. Therefore the following is admissible:
\[
\Omega_f[X,W,u] := \int^S_{Z,V} \pExp[\tfrac{W}{S} g'(u)[Z, V]] \tilde{\Omega}[X, Z^*, V^*].
\]
So we have
\[
\HH_f[X,W,u] = (q-1)\pLog \Omega_f[X,W,u] = \sum_{i\geq 0} \HH_{f,i}[X,W] u^i
\]
with $\HH_{f,i}[X,W]\in R[X,W]$. Finally notice that
\[
\HH_{f,i}[X] = \HH_{f,i}[X,S]\equiv\HH_{f,i}[X,0] \pmod{(q-1)R[X]},
\]
and 
\[
\HH_{f}[X,0,u] = (q-1)\pLog \Omega_f[X,0,u],
\]
\[
\Omega_f[X,0,u] = \int^S_{Z,V} \tilde{\Omega}[X, Z^*, V^*] = \tilde{\Omega}[X, 0, 0] = \Omega[X].
\]
This implies
\[
\HH_{f,i}[X] \equiv 0 \pmod{(q-1)R[X]} \qquad (i\geq 1).
\]
\end{proof}

Then our main result is
\begin{thm}\label{thm:main tech}
For any $g\geq 0$ let 
\[
\Omega(T, q, t,\alpha_1,\ldots,\alpha_g) = \sum_{\mu\in\cP} \frac{\prod_{i=1}^g N_\mu(\alpha_i^{-1})}{N_\mu(1)} T^{|\mu|},
\]
where
\[
N_\mu(u)=\prod_{\square\in\mu}(q^{a(\square)}-u t^{1+l(\square)})(q^{a(\square)+1}-u^{-1} t^{l(\square)}).
\]
Let
\[
\Omega'(T, q, t,\alpha_1,\ldots,\alpha_g) = \sum_{\mu\in\cP} \Omega_\mu(q,t,\alpha_1,\ldots,\alpha_g) T^{|\mu|},
\]
where $\Omega_\mu$ are the Schiffmann's terms defined in Section \ref{sec:schiffmanns terms}. Let 
\[
\HH(T, q, t,\alpha_1,\ldots,\alpha_g) = -(q-1)(t-1)\pLog \Omega(T, q, t,\alpha_1,\ldots,\alpha_g),
\]
\[
\HH'(T, q, t,\alpha_1,\ldots,\alpha_g) = -(q-1)(t-1)\pLog \Omega'(T, q, t,\alpha_1,\ldots,\alpha_g),
\]
and let $\HH(q, t,\alpha_1,\ldots,\alpha_g)_k$ denote the $k$-th coefficient of $\HH(T, q, t,\alpha_1,\ldots,\alpha_g)$ viewed as a power series in $T$, and similarly for $\HH'$. Then we have
\begin{enumerate}
\item $\HH'(q, t,\alpha_1,\ldots,\alpha_g)_k \in \QQ(t)[q^{\pm 1}, \alpha_1^{\pm 1},\ldots,\alpha_g^{\pm 1}]$,
\item $\HH'(1, t,\alpha_1,\ldots,\alpha_g)_k=\HH(1, t,\alpha_1,\ldots,\alpha_g)_k$.
\end{enumerate}
\end{thm}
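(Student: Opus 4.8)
The plan is to recognise $\Omega'$ as a one-parameter deformation of $\Omega$ and feed it into Lemma~\ref{lem:deformation}. By the final Proposition of Section~\ref{sec:schiffmanns terms} we have $\Omega_\mu=f_\mu\,\prod_{i=1}^g N_\mu(\alpha_i^{-1})/N_\mu(1)$, so writing $c_\mu:=\prod_{i=1}^g N_\mu(\alpha_i^{-1})/N_\mu(1)$ the two series differ only through the factor $f_\mu$,
\[
\Omega=\sum_{\mu\in\cP}c_\mu\,T^{|\mu|},\qquad \Omega'=\sum_{\mu\in\cP}c_\mu f_\mu\,T^{|\mu|}.
\]
I work over the lambda ring $R=\QQ(t)[q^{\pm1},\alpha_1^{\pm1},\ldots,\alpha_g^{\pm1}]$; the factor $-(t-1)$ relating the Theorem's normalisation to the $(q-1)\pLog$ of Lemma~\ref{lem:deformation} is a unit of $\QQ(t)\subset R$ and can be ignored. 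For $g=0$ there are no $\alpha_i$, $f\equiv1$ and $\Omega'=\Omega$, so both claims are trivial; assume $g\ge1$.

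First I produce the deformation. Schiffmann's $f$ is not itself a regular function, being a rational function of the $z_i$ with denominators coming from $P(x)=\prod_k(1-\alpha_k^{-1}x)$; but replacing $\alpha_k^{-1}$ by $u\,\alpha_k^{-1}$ throughout~\eqref{eq:f definition} and expanding in $u$ turns every coefficient into a Laurent polynomial in the $z_i$. This produces $f(u)=\sum_{i\ge0}f^{(i)}u^i$ with $f(1)$ equal to Schiffmann's $f$ (so $f_\mu(1)=f_\mu$) and $f^{(0)}\equiv1$ (at $u=0$ one has $P\equiv1$ and the symmetrised sum telescopes to $1$, as in Example~\ref{ex:f small n} for $n\le2$). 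Each $f^{(i)}$ is a regular function: symmetry is clear, the interpolation $f^{(i)}_{n+1}(1,z)=f^{(i)}_n(tz)$ holds coefficientwise in $u$ by Proposition~\ref{prop:regularity} (which is valid for all values of the $\alpha_k$), and Laurent-polynomiality follows by expanding $1/P$ in $u$ together with the denominator bounds of Section~4. Under the correspondence~\eqref{eq:qzalpha to tqu} the undeformed $\Omega$ coincides with a series whose plethystic logarithm is integral by the main theorem of~\cite{mellit2016integrality}, so $\Omega$ is admissible and Lemma~\ref{lem:deformation} applies, giving
\[
\HH_f[X,u]=\HH[X]+\sum_{i\ge1}u^i\,\HH_{f,i}[X],\qquad \HH_{f,i}[X]\in(q-1)R[X]\quad(i\ge1),
\]
with $\HH[X]$ having coefficients in $R$.

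It remains to pass to $u=1$, where $\Omega_f[X,1]=\Omega'[X]$. Part~(ii) is immediate and robust: coefficientwise in $u$ one has $\HH_f[X,u]\equiv\HH[X]\pmod{(q-1)}$, hence at $u=1$ also $\HH'[X]\equiv\HH[X]\pmod{(q-1)}$, which after specialising $X$ gives that the $q=1$ specialisations of $\HH'_k$ and $\HH_k$ coincide. Part~(i) is the delicate point, which I expect to be the main obstacle: since $1/P$ expands into an infinite series in $u$, the series $\HH_f[X,u]$ does not terminate, and one cannot conclude $\HH'_k\in R$ merely from the termwise membership $\HH_{f,i}\in R$ (the value at $u=1$ of an $R$-coefficiented power series need not lie in $R$). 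I would resolve this by re-running the admissibility construction of the Lemma's proof directly at $u=1$ rather than evaluating numerically: the symmetric-function lift $\tilde f$ of the actual $f$ and its plethystic logarithm $g'$ exist in a degree completion of $R[Z,Z^*]$, the operators $\int^S$, $\nabla^{\pm1}$, $\tau$, $\tau^*$, $\omega$ still preserve admissibility there, and therefore $\Omega'=\int^S_{Z,V}\pExp[g'[Z,V]]\,\tilde\Omega[X,Z^*,V^*]$ is admissible. Hence $\HH'=-(t-1)(q-1)\pLog\Omega'$ has all coefficients in $R$, which is part~(i). Throughout, the scalar statements of the Theorem are obtained by specialising the symmetric-function variable $X$, a $\lambda$-ring map that commutes with $\pLog$ and preserves $R$.
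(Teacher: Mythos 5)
Your setup is exactly the paper's: the same decomposition $\Omega_\mu=f_\mu\cdot\prod_i N_\mu(\alpha_i^{-1})/N_\mu(1)$, the same deformation $\alpha_k^{-1}\mapsto u\,\alpha_k^{-1}$, and Lemma~\ref{lem:deformation} over $R=\QQ(t)[q^{\pm1},\alpha_1^{\pm1},\ldots,\alpha_g^{\pm1}]$. The gap is at the one step you yourself flag as delicate, and your proposed repair does not close it. Re-running the admissibility construction ``directly at $u=1$'' forces you to lift the genuinely rational function $f$ to a symmetric function, which requires expanding the denominators $P(z_i)$ and $P(q z_i/z_j)$ into infinite series; these expansions, and the infinite sums produced when $\int^S_{Z,V}$ pairs the lift against $\pExp[Z^* B_\mu + V^* B_\mu^*]$ to effect the evaluation at $z_i=q^{\mu_i}t^{i-N}$, converge only $\alpha^{-1}$-adically, i.e.\ in $\hat R=\QQ(t)[q^{\pm1}]((\alpha_1^{-1},\ldots,\alpha_g^{-1}))$, not in $R$ --- no ``degree completion of $R[Z,Z^*]$'' avoids this, because the divergence sits in the coefficients, not in the symmetric-function degree. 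So the most your completed construction can yield is admissibility over $\hat R$: membership $\HH'_k\in\hat R$ and the congruence $\HH'_k\equiv\HH_k\pmod{(q-1)\hat R}$. That is strictly weaker than part~(i), and your proposal contains no mechanism for descending from $\hat R$ to $R$. The missing ingredient is Corollary~\ref{cor:laurent}: since $N_\mu(1)\Omega_\mu$ is a Laurent polynomial, one knows a priori that $\HH'_k\in\QQ(q,t)[\alpha_1^{\pm1},\ldots,\alpha_g^{\pm1}]$, and combining this with membership in $\hat R$ (using $\QQ(q,t)\cap\QQ(t)[q^{\pm1}]=\QQ(t)[q^{\pm1}]$ coefficientwise) gives $\HH'_k\in R$. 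You never invoke this corollary, and without it your final step fails.

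For comparison, the paper never evaluates a completed construction at $u=1$. It keeps $u$ as a bookkeeping variable and observes that after $\alpha_i^{-1}\mapsto u\,\alpha_i^{-1}$ the $u$-degree of every monomial equals its total $\alpha^{-1}$-degree, so in the Laurent-series ring the coefficient of any fixed monomial in the $\alpha_i$ and $T$ has \emph{bounded} degree in $u$; hence $u=1$ is a finite, legitimate specialization, after which Corollary~\ref{cor:laurent} performs exactly the descent described above. Note also that your part~(ii) is not ``immediate'': a termwise statement $\HH_{f,i}\in(q-1)R$ about an infinite series in $u$ says nothing at $u=1$ until one has this bounded-degree (or some other convergence) argument, so as written (ii) suffers from the same gap as (i) --- though both are cured simultaneously once the evaluation is justified. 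A smaller point: your claim $f^{(0)}\equiv1$ is asserted by ``telescoping'' and verified only for $n\le2$; since Lemma~\ref{lem:deformation} requires the constant term to be exactly $1$, you need the paper's actual argument, an induction in which Proposition~\ref{prop:regularity} shows $L_n-1$ is divisible by $\prod_{i=1}^n(z_i-1)$ while a degree count bounds $\deg L_n$ by $n-1$, forcing $L_n-1=0$.
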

\begin{proof}
By the main result of \cite{mellit2016integrality} we have 
\[
\HH(q, t,\alpha_1,\ldots,\alpha_g)_k \in \QQ[t,q, \alpha_1^{\pm 1},\ldots,\alpha_g^{\pm 1}].
\]
By Corollary \ref{cor:laurent} we have
\[
\HH'(q, t,\alpha_1,\ldots,\alpha_g)_k \in \QQ(t,q)[\alpha_1^{\pm 1},\ldots,\alpha_g^{\pm 1}].
\]
So we can pass to the ring of Laurent series in $\alpha_1^{-1},\ldots,\alpha_g^{-1}$ and it is enough to prove the corresponding statements (i) and (ii) for the coefficients in front of monomials of the form $\prod_{i=1}^g \alpha_i^{m_i}$. Let us apply Lemma \ref{lem:deformation} for the ring $R=\QQ(t)[q^{\pm 1}, \alpha_1^{\pm 1},\ldots,\alpha_g^{\pm 1}]$, series
\[
\Omega[X, q, t,\alpha_1,\ldots,\alpha_g] = \sum_{\mu\in\cP} \frac{\prod_{i=1}^g N_\mu(\alpha_i^{-1}) \tilde{H}_\mu[X,q,t]}{N_\mu(1)},
\]
and the regular function $f(u)$ obtained from $f$ (see \eqref{eq:f definition} and Proposition \ref{prop:regularity}) by setting $u \alpha_i^{-1}$ in place of $\alpha_i^{-1}$, so that $f(u)$ becomes a power series in $u$ with coefficients in $R$.

To be able to apply Lemma \ref{lem:deformation} we need to show that the constant coefficient of $f(u)$ is $1$, in other words we need to check that
\[
\sum_{\sigma\in S_n}\sigma \left\{
\prod_{i>j}\left(\frac{1}{1-\frac{z_i}{z_j}}\right)  \prod_{i> j+1} (1-q \tfrac{z_i}{z_j}) \prod_{i\geq 2} (1-z_i)
 \right\}=1.
\]
We do this by induction. Denote the left hand side by $L_n$. Notice that $L_n$ is a polynomial. Suppose we know that $L_{n-1}=1$. Then by Proposition \ref{prop:regularity} we know that $L_{n}-1$ is divisible by $z_1-1$. Since it is a symmetric polynomial, it must be divisible by $\prod_{i=1}^n (z_i-1)$. On the other hand, the degree of $L_n$ is at most $n-1$, so necessarily $L_n-1=0$.

After applying Lemma \ref{lem:deformation} we can set $X=T$, where $T$ is the variable from the statement of the Theorem. In particular, $T$ is assumed to satisfy $p_k[T]=T^k$ and we can use the identity $\tilde{H}_\mu[T;q,t]=T^{|\mu|}$. Let
\[
\HH'(T,q,t,u) = -(q-1)(t-1)\pLog\left[\sum_{\mu\in \cP} \frac{\prod_{i=1}^g N_\mu(\alpha_i^{-1})}{N_\mu(1)} T^{|\mu|} f_\mu(u) \right].
\]
Lemma \ref{lem:deformation} says that
\[
\HH'(T,q,t,u)-\HH(T,q,t) \in (q-1) \QQ(t)[q^{\pm 1}, \alpha_1^{\pm 1},\ldots,\alpha_g^{\pm 1}][[T,u]].
\]
On the other hand, the coefficient in front of any monomial in $\alpha_1, \ldots,\alpha_g, T$ has bounded degree in $u$, wo we can set $u=1$ and obtain a statement about Laurent series in $\alpha_i^{-1}$:
\[
\HH'(T,q,t,1)-\HH(T,q,t) \in (q-1) \QQ(t)[q^{\pm 1}]((\alpha_1^{-1},\ldots,\alpha_g^{-1}))[[T]].
\]
Finally we remember that $\HH'(T,q,t)=\HH'(T,q,t,1)$ and remember that the coefficients of $\HH'(T,q,t)$ are Laurent \emph{polynomials} in $\alpha_i$ to obtain
\[
\HH'(T,q,t)-\HH(T,q,t) \in (q-1) \QQ(t)[q^{\pm 1}, \alpha_1^{\pm 1},\ldots,\alpha_g^{\pm 1}][[T]].
\]
\end{proof}

Theorem \ref{thm: main intro} is a direct corollary of Theorem \ref{thm:main tech} and \cite{schiffmann2014indecomposable}.

\section*{Acknowledgements}
Olivier Schiffmann drew my attention to his paper \cite{schiffmann2014indecomposable}, suggested to try to deduce the conjectures of Hausel-Rodriguez-Villegas and pointed out that the conjecture of Hausel-Thaddeus also follows. He also made several corrections in a preliminary version of this paper. Fernando Rodriguez-Villegas experimented with Schiffmann's formula on a computer, and I had interesting conversations with him on this subject. I would like to thank them and Tamas Hausel for encouragement. 

I would like to thank the organizers of the workshops on Higgs bundles at EPFL Lausanne in 2016 and SISSA Trieste in 2017. I discovered some of the ideas leading to the result of this work during these workshops.

This work was performed during my stay at IST Austria, where I was supported by the Advanced Grant ``Arithmetic and Physics of Higgs moduli spaces'' No. 320593 of the European Research Council.

\printbibliography

\end{document}